\numberwithin{equation}{subsection}
\newtheorem{theorem}[equation]{Theorem}
\newtheorem{proposition}[equation]{Proposition}
\newtheorem{lemma}[equation]{Lemma}
\newtheorem{corollary}[equation]{Corollary}
\newtheorem{conjecture}[equation]{Conjecture}
\theoremstyle{definition}
\newtheorem{rmk}[equation]{Remark}
\newtheorem{eg}[equation]{Example}
\newtheorem{defn}[equation]{Definition}
\newenvironment{definition}[1][]{\begin{defn}[#1]\pushQED{\qed}}{\popQED \end{defn}}
\newenvironment{subeqns}[1][]{\addtocounter{equation}{-1}
\begin{subequations}

}{\end{subequations}}
\newcommand{\cA}{\mathcal{A}}
\newcommand{\cB}{\mathcal{B}}
\newcommand{\cO}{\mathcal{O}}
\newcommand{\bS}{\mathbf{S}}
\renewcommand{\phi}{\varphi}
\newcommand{\arxiv}[1]{\href{http://arxiv.org/abs/#1}{{\tt arXiv:#1}}}
\DeclareMathOperator{\rank}{rank}
\DeclareMathOperator{\Sym}{Sym}
\DeclareMathOperator{\quot}{\mathsf{Quot}}
\begin{document}
 \baselineskip=17pt
\parindent=30pt

\title[On the cohomology of tautological bundles]{On the cohomology of tautological bundles\\ over Quot schemes of curves}

\author{Alina Marian}
\address{Department of Mathematics, Northeastern University}
\email{a.marian@northeastern.edu}
\author{Dragos Oprea}
\address{Department of Mathematics, University of California, San Diego}
\email {doprea@math.ucsd.edu}
\author{Steven V Sam}
\address{Department of Mathematics, University of California, San Diego}
\email {ssam@ucsd.edu}

\begin{abstract}
We consider tautological bundles and their exterior and symmetric powers on the Quot scheme over the projective line. We prove and conjecture several statements regarding the vanishing of their higher cohomology, and we describe their spaces of global sections via tautological constructions. To this end, we make use of the embedding of the Quot scheme as an explicit local complete intersection in the product of two Grassmannians, studied by Str{\o}mme. This allows us to construct resolutions with vanishing cohomology for the tautological bundles and their exterior and symmetric powers. We further illustrate our approach with a few additional cohomological calculations. \end{abstract}

\maketitle

\section{Introduction}

\subsection{Tautological vector bundles}

We consider the Quot scheme $\quot_{\, \mathbb P^1} (\mathbb C^N, n, r)$ \\ parametrizing rank $r$ degree $n$ quotients of the trivial bundle of rank $N$ over $\mathbb P^1$: $$0\to S\to \mathbb C^N\otimes \mathcal O_{\mathbb P^1} \to Q\to 0,\quad \text{rank }Q=r, \quad \text{deg }Q=n.$$ It is a smooth projective variety which comes equipped with the universal sequence $$0\to \mathcal {S}  \to \mathbb C^N\otimes \mathcal O \to \mathcal Q\to 0$$ over $\mathbb P^1\times \quot_{\, \mathbb P^1} (\mathbb C^N, n, r)$. We let $p$ and $\pi$ denote the projections to the two factors.

The Quot scheme over the projective line is an important testing ground for ideas in moduli theory. It has rich geometry and bears ties to homogeneous and quiver varieties while not being one of them. A beautiful systematic study of $\quot_{\, \mathbb P^1} (\mathbb C^N, n, r)$ was carried out in \cite{St}, where the Quot scheme was shown to be rational, and the Betti numbers, generators for the Chow ring, and the nef cone were calculated. As noted in \cite{St}, $\quot_{\, \mathbb P^1}  (\mathbb C^N, n, r)$ is a compactification of the space of degree $d$ morphisms from $\mathbb P^1$ to the Grassmannian ${\mathbf G} (N, r)$ of $r$ dimensional quotients of $\mathbb C^N$. With this point of view, in the 1990s, the Quot scheme was used effectively to calculate the small quantum cohomology ring of the Grassmannian, leading eventually to a calculation for all flag varieties \cite{bertram, cf1, cf2, K, chen}. Further progress included the description of the equivariant cohomology ring in \cite{bcs}, the calculation of the effective cone \cite {J}, and the birational study \cite{ito} which established $\quot_{\, \mathbb P^1} (\mathbb C^N, n, r)$ as a Mori dream space, among others.

In this paper, we take up the problem of calculating the cohomology of Schur functors of tautological bundles over $\quot_{\, \mathbb P^1} (\mathbb C^N, n, r)$. While our results are primarily in the setting of zero quotient rank ($r=0$), the method is available for any rank. Conjectures for any $r$ are formulated in Section \ref{xxyy} below.

To start, note that for any line bundle $L\to \mathbb P^1$, there is an induced tautological complex of rank $n + r (\deg L + 1)$ over $\quot_{\, \mathbb P^1} (\mathbb C^N, r, n)$ given by
\begin{equation}\label{taut}
  L^{[n]}= R\pi_\star(p^\star L\otimes \mathcal Q).
\end{equation}
When $r=0,$ $L^{[n]}$ is a {\it vector bundle} of rank $n$. 
(Note that $R^1\pi_\star(p^\star L \otimes \mathcal Q) = 0$ for $r=0$ since the support of $\mathcal Q$ is finite in each fiber of $\pi$.)
We let $\quot_{\,\mathbb P^1}(\mathbb C^N, n)$ denote the Quot scheme in this case, and show the following results. 

\begin{theorem}\label{ta}
\textnormal{(1)} For all line bundles $L\to \mathbb P^1$ with $\deg L\geq n\geq k$, we have $$H^0\bigg(\quot_{\,\mathbb P^1}(\mathbb C^N, n), \bigwedge^k L^{[n]}\bigg)\cong \bigwedge^k H^0(L^{\oplus N})$$ and the higher cohomology vanishes $$H^i\bigg(\quot_{\,\mathbb P^1}(\mathbb C^N, n), \bigwedge^k L^{[n]}\bigg)=0, \quad i>0.$$ 
\textnormal{(2)} More generally, assume $\deg L\geq n\geq k$ and let $p_1, \ldots, p_t$ be nonnegative integers, $0\leq t\leq N-1$. We have 
$$\textnormal{Ext}^{i} \bigg(\bigwedge^{p_1} L^{[n]}\otimes \cdots \otimes \bigwedge^{p_t} L^{[n]}, \bigwedge^{k} L^{[n]}\bigg)=\begin{cases} \bigwedge^{k-|p|} H^0(L^{\oplus N}) & \text{ if } i=0\\0 &\text{ if } i>0\end{cases},$$ for  $|p|=p_1+\cdots+p_t\leq k$. If $|p|>k$, all the above Ext groups vanish. 
\end{theorem}

\begin{theorem}\label{tb}
\textnormal{(1)} For all line bundles $L\to \mathbb P^1$ with $\deg L\geq n\geq k$, we have $$H^0\left(\quot_{\,\mathbb P^1}(\mathbb C^N, n), \Sym^k L^{[n]}\right)\cong \Sym^{k} H^0(L^{\oplus N})$$ and the higher cohomology vanishes $$H^i\left(\quot_{\,\mathbb P^1}(\mathbb C^N, n), \Sym^k L^{[n]}\right)=0, \quad i>0.$$
\textnormal{(2)} More generally, assume $\deg L\geq n\geq k$ and let $p_1, \ldots, p_t$ be nonnegative integers, $0\leq t\leq N-1$. Then 
$$\textnormal {Ext}^i \bigg(\bigwedge^{p_1} L^{[n]}\otimes \cdots \otimes \bigwedge^{p_t} L^{[n]}, \textnormal{Sym}^k \,L^{[n]}\bigg)=\begin{cases}  \textnormal{Sym}^{k-|p|}\, H^0(L^{\oplus N}) &\text{ if } i=0 \text{ and all } p_j\in \{0, 1\}\\ 0 &\text{ otherwise}\end{cases},$$ for $|p|\leq k$. If $|p|>k$, all the above Ext groups vanish. 
\end{theorem}

We expect that the vanishing of higher cohomology of $\bigwedge^k L^{[n]}$ and $\text{Sym}^kL^{[n]}$ in Theorems \ref{ta} and \ref{tb} holds whenever $\deg L\geq -1$. For arbitrary exterior powers, the above vanishings cannot be accessed by the classical theorems. In the special case of the determinant line bundle $\bigwedge^n L^{[n]}$, the bound in our theorems improves the bound $\deg L\geq Nn-N-n$ obtained by Kodaira vanishing. The latter can be applied using the description of the ample cone in \cite {St} and a standard calculation of the canonical bundle via Grothendieck--Riemann--Roch.
\vskip.1in
It is easy to see how the sections of the bundles $\bigwedge^k L^{[n]}$ and $\Sym^k L^{[n]}$ over $\mathsf{Quot}_{\mathbb P^1}(\mathbb C^N, n)$ arise geometrically. Indeed, from the universal quotient $$\mathbb C^N\otimes \mathcal O\to \mathcal Q\to 0$$ over $\mathbb P^1\times \quot_{\,\mathbb P^1}(\mathbb C^N, n)$, after tensoring by $L$ and pushing forward, we immediately obtain a map $$H^0(L^{\oplus N})\otimes \mathcal O_{\mathsf{Quot}}\to L^{[n]}.$$ Taking exterior and symmetric powers, and taking cohomology, we obtain morphisms \begin{equation}\label{phipsi}
  \Phi_k \colon \bigwedge^k H^0(L^{\oplus N})\to H^0\bigg(\bigwedge^k L^{[n]}\bigg), \quad
  \Psi_k \colon \Sym^k H^0(L^{\oplus N})\to H^0\left(\Sym^k L^{[n]}\right).
\end{equation}
Our proofs will show that $\Phi_k$ and $\Psi_k$ are isomorphisms when $\deg L\geq n\geq k$. Thus all sections of $\bigwedge^k L^{[n]}$ and $\Sym^k L^{[n]}$ are obtained tautologically, while the higher cohomology vanishes. 
\vskip.1in
We further illustrate the techniques developed here by showing that: 

\begin{theorem}\label{tc}
  For all line bundles $L, M\to \mathbb P^1$ with $\deg M\geq n$ and $0\leq \deg M-\deg L\leq 1$, for all $p_1, \ldots, p_t\geq 0$, not all zero, and $1\leq t\leq N-1$, we have
  \[
    H^i\bigg(\quot_{\,\mathbb P^1}(\mathbb C^N, n), \big(\bigwedge^{p_1} L^{[n]}\big)^{\vee}\otimes (\bigwedge^{p_2} M^{[n]}\big)^{\vee}\otimes \cdots \otimes \big(\bigwedge^{p_t} M^{[n]}\big)^{\vee}\bigg)=0, \quad i\geq 0.
    \]
\end{theorem}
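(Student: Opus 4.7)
The plan is to use Str\o mme's realization of $\quot_{\mathbb P^1}(\mathbb C^N,n)$ as an explicit locally complete intersection in a product of two Grassmannians, reduce the cohomology in question to a hypercohomology computation on the ambient product via the Koszul resolution of the embedding, and then invoke Borel--Weil--Bott to show the vanishing term-by-term.

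\textbf{Step 1: Str\o mme embedding and tautological bundles.} Fix $m=\deg L$. Under the hypotheses $\deg M\ge n$ and $0\le \deg M-\deg L\le 1$ we have $m\ge n-1$ and $\deg M\in\{m,m+1\}$. In this range $\pi_*(p^*\cO(m)\otimes\cS)$ and $\pi_*(p^*\cO(m+1)\otimes\cS)$ are locally free on $\quot$ and compatible with base change, so Str\o mme's construction produces a locally-complete-intersection embedding
$$\iota\colon \quot_{\mathbb P^1}(\mathbb C^N,n)\hookrightarrow \mathbf G_1\times \mathbf G_2,$$
whose ideal is resolved on the product by a Koszul complex $\bigwedge^{\bullet}\cE^{\vee}$ for an explicit bundle $\cE$. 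Pushing the universal sequence forward against $p^*L$ and $p^*M$ realises $L^{[n]}$ and $M^{[n]}$ respectively as the restrictions to $\iota(\quot)$ of the tautological rank-$n$ quotient bundles on $\mathbf G_1$ and $\mathbf G_2$.

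\textbf{Step 2: reduction to the Grassmannian product.} The target sheaf $T$ in the statement is therefore the restriction to $\quot$ of an external tensor product $\tilde T$ of exterior powers of the duals of the tautological quotients on $\mathbf G_1$ and $\mathbf G_2$. Tensoring the Koszul resolution of $\iota_*\cO_{\quot}$ with $\tilde T$ produces a complex on $\mathbf G_1\times \mathbf G_2$ whose hypercohomology equals the cohomology we wish to vanish. Each $\bigwedge^{j}\cE^{\vee}$ decomposes, via a Cauchy-type identity, into external tensor products of Schur functors of the tautological sub-bundles on the two factors; so every term appearing in the ensuing spectral sequence is an external tensor product of Schur functors of tautological bundles on the two Grassmannians.

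\textbf{Step 3: Borel--Weil--Bott and the main obstacle.} It suffices to show that every such Schur term has vanishing total cohomology on $\mathbf G_1\times \mathbf G_2$; by Borel--Weil--Bott this is the assertion that the combined weight is non-regular (has a repeated entry after the $\rho$-shift) on at least one Grassmannian factor. The hypothesis $k_1+\cdots+k_r\ge 1$ ensures the Schur partitions on the quotient side carry positive content, while $r\le N-1$ bounds the number of columns arising in the Schur expansion of $\bigwedge^{k_1}\otimes \cdots\otimes \bigwedge^{k_r}$; combined with the weight shifts produced by the Koszul index and by the compatible choice of degrees $m$ and $m$ or $m+1$, these constraints force a collision with an adjacent entry of $\rho$ on one of the Grassmannian factors. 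The main technical obstacle is verifying this non-regularity uniformly across every component of the Koszul--Schur double complex; this is precisely where the numerical hypotheses $r\le N-1$ and $0\le \deg M-\deg L\le 1$ enter essentially.
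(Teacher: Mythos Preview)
Your strategy is exactly the paper's: Str{\o}mme's embedding (your $m=\deg L$ is shifted by one from the paper's $m=\deg M$, but equivalently), Koszul resolution tensored with the target, Cauchy decomposition of $\bigwedge^\ell\cE^\vee$, and Borel--Weil--Bott on each factor. The gap is that Step~3 only names the obstacle without supplying the mechanism that resolves it, and that mechanism is where all the content lies.

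The paper's device is a dichotomy on the partitions $\lambda$ in the Cauchy expansion $\bigwedge^\ell\cE^\vee=\bigoplus_{|\lambda|=\ell}\bS_{\lambda^\dagger}(\cA_1)\boxtimes\bS_\lambda(\cB_2^\vee\oplus\cB_2^\vee)$. Either some column length $\lambda^\dagger_j$ lies in the interval $[j,\,n+j-1]$, in which case BWB already kills $\bS_{\lambda^\dagger}(\cA_1)$ on $\mathbf G_1$; or no column does, and then $\lambda$ acquires a well-defined \emph{$n$-index} $i$ (the largest $j$ with $\lambda^\dagger_j\ge n+j$), forcing $\lambda_{i+1}=\cdots=\lambda_{i+n}=i$, and the vanishing must be won on $\mathbf G_2$. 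In that second case one shows, via Littlewood--Richardson, that every $\bS_\gamma(\cB_2^\vee)$ occurring in $\bS_\lambda(\cB_2^\vee\oplus\cB_2^\vee)$ satisfies $i\le\gamma_i\le d-n-r-1+i$: the upper bound uses dominance $\lambda\ge\alpha\cup\beta$ and $\alpha+\beta\ge\gamma$ together with the fact that $\lambda$ fits in a $(2n)\times(d-N-n)$ box and $r\le N-1$; the lower bound $\gamma_i\ge\alpha_i\ge i$ needs a short LR-tableau argument (if $\alpha_i<i$ the $i$th column of $\lambda/\alpha$ is too long for $\beta$ to have $\le n$ rows). Pieri-tensoring with the $r$ bundles $\bigwedge^{k_s}\cB_2^\vee$ then raises $\gamma_i$ by at most $r$, giving $i\le\delta_i\le d-n+i-1$ and hence the BWB repeat on $\mathbf G_2$. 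When $\deg L=\deg M-1$, the $\mathbf G_1$ factor carries an extra $\bigwedge^{k_1}\cB_1^\vee$ and the dichotomy must be refined to a \emph{$(k_1,n)$-index}, with only $r-1$ Pieri steps on $\mathbf G_2$; this is exactly how the hypothesis $0\le\deg M-\deg L\le 1$ enters. Your sketch surfaces neither this index nor the LR bounds on $\gamma_i$, and without them the assertion that ``these constraints force a collision'' remains unsupported.
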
 
\noindent Of course, the case $L=M$ is contained in Theorems \ref{ta}\,(2) and \ref{tb}\,(2) for $k=0$. \vskip.1in

Although we focus for notational simplicity on the case of quotients of the trivial bundle, the same arguments easily apply to the Quot scheme $\quot_{\,\mathbb P^1}(E, n)$ of finite quotients of an arbitrary vector bundle $E\to \mathbb P^1$. 

Let $a$ denote the largest degree appearing in the splitting of $E$ as a direct sum of line bundles, and set $\alpha=-\deg E+(N-1)a$. For Theorems \ref{ta} and \ref{tb}, we replace all instances of $H^0(L^{\oplus N})$ by $H^0(E\otimes L)$. For instance, Theorem \ref{ta}\,(1) takes the form 
 $$H^0\bigg(\quot_{\,\mathbb P^1}(E, n), \bigwedge^k L^{[n]}\bigg)\cong \bigwedge^k H^0(E\otimes L), \quad H^i\bigg(\quot_{\,\mathbb P^1}(E, n), \bigwedge^k L^{[n]}\bigg)=0, \quad i>0$$ whenever $$\deg L\geq n +\alpha, \quad n\geq k\geq 0.$$ Remark \ref{rembd} below explains how this bound emerges. Similarly, the analogue of Theorem \ref{tb}\,(1) reads $$H^0\left(\quot_{\,\mathbb P^1}(E, n), \Sym^k L^{[n]}\right)\cong \Sym^{k} H^0(E\otimes L), \quad H^i\left(\quot_{\,\mathbb P^1}(E, n), \Sym^k L^{[n]}\right)=0, \quad i>0.$$ The more general Theorem \ref{ta}\,(2) and Theorem \ref{tb}\,(2) are also correct after the analogous modifications. Likewise, Theorem \ref{tc} remains true under the assumption $\deg M\geq n+\alpha$.
 
\vskip.1in

We note that for a smooth projective curve $C$ of arbitrary genus, the holomorphic Euler characteristics of $\bigwedge^k L^{[n]}$, $\text{Sym}^k L^{[n]}$ and $\bigg(\bigwedge^p L^{[n]}\bigg)^{\vee}$ on $\quot_C (\mathbb C^N, n)$ were  calculated in \cite{OS} by equivariant localization. The following expectation regarding individual cohomology groups was also formulated in \cite [Question 20]{OS}:
\begin{equation}\label{spec} H^{\bullet}\bigg (\quot_{\,C }(\mathbb C^N, n), \bigwedge^{k} L^{[n]}\bigg)\cong \bigwedge^{k} H^{\bullet}(L^{\oplus N})\otimes \Sym^{n-k} H^{\bullet} (\mathcal O_C).\end{equation} The exterior and symmetric powers on the right hand side are understood in the graded sense. 

In the case when $C = \mathbb P^1,$ our theorems confirm this expectation for $\deg L \geq n$. We offer additional modest evidence for $k=1$ and $L$ of arbitrary degree:
\begin{corollary}\label{t2}
For all line bundles $L\to \mathbb P^1$, we have 
$$H^i\left(\quot_{\,\mathbb P^1}(\mathbb C^N, n), L^{[n]}\right)=0, \quad i\geq 2.$$
\end{corollary}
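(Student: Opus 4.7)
The plan is descending induction on $\deg L$, with Theorem \ref{ta} (applied with $k=1$) supplying the base case $\deg L\geq n$: it gives $H^i(L^{[n]})=0$ for all $i\geq 1$, hence \emph{a fortiori} for $i\geq 2$.

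For the inductive step -- assuming the statement for $L(1)$ and deducing it for $L$ -- I would exploit the point-evaluation sequence $0\to L\to L(1)\to L(1)|_p\to 0$ on $\mathbb P^1$, for a fixed $p\in\mathbb P^1$. Pulling back to $\mathbb P^1\times \quot$ and derived-tensoring with $\cQ$ produces a four-term exact sequence, because $L(1)|_p$ is a skyscraper and $\cQ$ is not flat over $\mathbb P^1$. All four terms are torsion along the fibers of $\pi$, so the higher direct images $R^{>0}\pi_*$ vanish on each, and pushing forward yields the four-term exact sequence on $\quot$
\[
0 \to K_p \to L^{[n]} \to L(1)^{[n]} \to C_p \to 0,
\]
where $C_p=\pi_*(\cQ|_{\{p\}\times \quot})$ and $K_p=\pi_*\mathcal{T}or_1(p^*\cO_p,\cQ)$. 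Splitting this sequence through its middle image, taking long exact cohomology, and invoking the inductive hypothesis on $L(1)^{[n]}$, the corollary reduces to proving $H^i(\quot, C_p)=0$ for $i\geq 1$ and $H^i(\quot, K_p)=0$ for $i\geq 2$.

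To control $K_p$ and $C_p$, I would apply the same derived restriction $-\otimes^L\cO_{\{p\}\times\quot}$ to the universal sequence $0\to\cS\to\cO^{\oplus N}\to\cQ\to 0$. Since $\cS$ and $\cO^{\oplus N}$ are locally free, only $\cQ$ contributes a $\mathcal{T}or_1$, and the resulting four-term exact sequence on $\{p\}\times\quot\cong \quot$ reads
\[
0 \to K_p \to \cS|_p \to \cO_{\quot}^{\oplus N} \to C_p \to 0,
\]
with $\cS|_p$ a rank-$N$ vector bundle. Because $\quot$ is rational \cite{St}, $H^{>0}(\quot,\cO_{\quot})=0$, and the long exact cohomology of this four-term sequence reduces the required vanishings to cohomological vanishings for $\cS|_p$, together with the surjectivity of the evaluation $\mathbb C^N \to H^0(C_p)$, which controls the boundary contribution to $H^2(K_p)$.

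The main obstacle is then to establish $H^i(\quot,\cS|_p)=0$ for $i\geq 1$, which is not a direct consequence of Theorems \ref{ta}--\ref{tc}. A natural approach is to invoke Str{\o}mme's embedding of $\quot$ as a local complete intersection in a product of two Grassmannians: there $\cS|_p$ lifts to a standard tautological bundle whose higher cohomology vanishes by Borel--Weil--Bott, so that the Koszul complex of the defining regular section transfers the vanishing down to $\quot$.
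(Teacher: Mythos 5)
Your starting point---the twist-by-a-point exact sequence on $\mathbb P^1$ pushed forward from $\mathbb P^1\times\quot$---is the same as the paper's, and the descending induction on $\deg L$ with Theorem \ref{ta} supplying the base case is the right skeleton. But the route you take through the middle has both an unnecessary complication and a genuine gap.

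The unnecessary complication: the term $K_p$ is zero, so your four-term sequences are in fact short exact sequences $0\to L^{[n]}\to L(1)^{[n]}\to C_p\to 0$ and $0\to\cS|_p\to\cO_{\quot}^{\oplus N}\to C_p\to 0$. This is elementary: restricting $0\to\cS\to\cO^{\oplus N}\to\cQ\to 0$ to $\{p\}\times\quot$, the map $\cS|_p\to\cO^{\oplus N}$ is a morphism of locally free sheaves of the same rank $N$ on the integral scheme $\quot$, and it is generically an isomorphism (the generic quotient has support disjoint from $p$). Its kernel is therefore a torsion subsheaf of a locally free sheaf, hence zero, so $\cTor_1(\cO_{\{p\}\times\quot},\cQ)=0$. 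Noticing this would have collapsed the bookkeeping around $K_p$ and $H^2(K_p)$.

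The genuine gap: you reduce everything to $H^{>0}(\quot,\cS|_p)=0$, and the closing paragraph does not actually prove it. This vanishing is not one of the paper's results, and the claim that $\cS|_p$ ``lifts to a standard tautological bundle'' on $\mathbf G_1\times\mathbf G_2$ is incorrect: using $0\to\pi_*\cS(m-1)\to\pi_*\cS(m)\to\cS|_p\otimes\cO(m)|_p\to 0$ one sees that $\cS|_p$ is (up to a one-dimensional twist) the cokernel of $\cA_1\hookrightarrow\cA_2$ over $\quot$, not the restriction of a tautological bundle on the ambient product of Grassmannians. Establishing $H^{>0}(\cS|_p)=0$ via the Koszul resolution would require a Borel--Weil--Bott/Littlewood--Richardson analysis parallel to Section \ref{s3}, which you have not carried out. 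The paper sidesteps this entirely by a cleverer use of the same sequence: applying $0\to L(-p)^{[n]}\to L^{[n]}\to\cQ_p\to 0$ once with $\deg L\ge n+1$, Theorem \ref{ta} kills the higher cohomology of both $L^{[n]}$ and $L(-p)^{[n]}$, and the long exact sequence immediately gives $H^{\ge 1}(\cQ_p)=0$. With that in hand, the descending induction over $\deg L$ runs without any reference to $\cS|_p$.
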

 
The above results and formula \eqref{spec} reflect a full parallelism to the cohomology of tautological bundles over the Hilbert scheme of points on a surface computed in \cite{D, Sc, Sc2, Kr, A}. For instance, for all line bundles $L\to X$ over smooth projective surfaces, we have 
\[
  H^{\bullet}\big(X^{[n]}, \bigwedge^k L^{[n]}\big) = \bigwedge^k H^{\bullet}(X, L) \otimes \Sym^{n-k} H^{\bullet}(X, \mathcal O_X).
\]
The Bridgeland--King--Reid correspondence plays a central role in the proof. We refer the reader to the beautiful article \cite{Kr} for state-of-the-art calculations in the surface case. \vskip.1in

\subsection{Proofs} To establish Theorems \ref{ta}, \ref{tb}, \ref{tc}, the key idea is to use the twofold Grothendieck embedding of the Quot scheme into a product of Grassmannians,
\[
  \iota \colon \mathsf{Quot}_{\mathbb P^1} (\mathbb C^N, n)\hookrightarrow \mathbf G_1\times \mathbf G_2,
\]
so that the image of $\iota$ is the zero locus of a regular section $\sigma$ of an explicit homogeneous vector bundle $$\mathcal E\to \mathbf G_1\times \mathbf G_2.$$ This embedding as an explicit local complete intersection is specific to the Quot scheme (of quotients of all ranks) over the projective line. It was considered and studied in detail by 
 Str{\o}mme \cite {St} who used it to derive information about the Chow ring. Our paper widens the picture, and shows that the embedding is also well-suited to the study of the tautological bundles. 

Using the Koszul resolution for $\sigma$ $$\cdots \to \bigwedge^{2}\mathcal E^{\vee}\to \bigwedge^1\mathcal E^\vee\to \mathcal O_{\bf {G_1}\times {\bf G_2}}\to \mathcal O_{\text{Quot}}\to 0\,,$$ we obtain resolutions 
\[
  \cdots\to \mathcal R_2\to \mathcal R_1\to \mathcal R_0\to \mathcal \iota_{\star} \mathcal F\to 0
\]
for each one of the tautological bundles $\mathcal F$ appearing in Theorems \ref{ta}, \ref{tb} and \ref{tc}. The resolutions thus obtained are special. Remarkably, we show that the terms $\mathcal R_\ell$ have vanishing cohomology for all $\ell\geq 1$, while $\mathcal R_0$ has no higher cohomology. This allows us to control the cohomology of the tautological bundles and establish our results. 

The argument makes crucial use of the Borel--Weil--Bott theorem on the two Grassmannians ${\bf G}_1, {\bf G}_2$, along with several combinatorial arguments involving the Littlewood--Richardson rule. In intermediate stages, statements of independent interest are established generally over arbitrary Grassmannians; we refer the reader to Section \ref{s3} for details. It takes a delicate interplay between Borel--Weil--Bott and Littlewood--Richardson vanishings to show that all higher terms $\mathcal R_\ell$, $\ell\geq 1$ of the resolution have no cohomology at all. 

\vskip.1in
While the above theorems concern genus $0$, we also obtain the following corollary in arbitrary genus. Let $y$ be a variable. Setting
$$\bigwedge_y V:=\sum_k y^k \bigwedge^kV, \quad \Sym_y V:=\sum_k y^k \,\text{Sym}^{k}V,$$
the result below recovers Theorem $1$, a special case of Theorem $2$, and Theorem $4$ in \cite {OS}.
\begin{corollary}\label{tos}
  \begin{subeqns}
		Let $L, M_1, \ldots, M_t\to C$ be line bundles over a smooth projective curve, where $1\leq t\leq N-1$. Then 
                \begin{align}
                  \label{wedge}\sum_{n=0}^{\infty} &q^n\chi\bigg(\quot_{\,C }(\mathbb C^N, n), \bigwedge_yL^{[n]}\bigg)={(1-q)^{-\chi(\cO_C)}}{(1+qy)^{N\chi(L)}},\\
		\label{dual}\sum_{n=0}^{\infty} &q^n\chi\bigg(\quot_{\,C }(\mathbb C^N, n), \bigotimes_{i=1}^{t}\big(\bigwedge_{y_i} M_i^{[n]} \big)^{\vee}\bigg)=(1-q)^{-\chi(\cO_C)}.
		\end{align}
	Furthermore, in genus $0$, we have 	
\begin{align}
\label{sym}\sum_{n\geq k} &q^ny^k \chi\left(\quot_{\,\mathbb P^1}(\mathbb C^N, n), \Sym^k L^{[n]}\right)=(1-q)^{-1}(1-qy)^{-N\chi(L)}.
        \end{align}
      \end{subeqns}
    \end{corollary}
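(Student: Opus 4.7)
My plan is to derive the three generating function identities from Theorems \ref{ta}, \ref{tb}, \ref{tc} by polynomial interpolation in the degrees of the line bundles, combined with the arbitrary-genus Euler characteristic computations of \cite{OS}.

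First, I would observe that on $\mathsf{Quot}_{\mathbb{P}^1}(\mathbb{C}^N, n)$, Grothendieck--Riemann--Roch exhibits the Chern character of $L^{[n]} = R\pi_*(p^* L \otimes \cQ)$ as an affine-linear function of $\deg L$; consequently, the Euler characteristic $\chi(\cF)$ of any polynomial functor $\cF$ in the tautological bundles is a polynomial in the relevant line bundle degrees. Theorem \ref{ta} then pins down $\chi(\bigwedge^k L^{[n]}) = \binom{N\chi(L)}{k}$ whenever $\deg L \geq n$, and since both sides are polynomials of degree at most $k$ in $\deg L$, they coincide identically for all $\deg L$. Summing against $q^n y^k$ and using that $\bigwedge^k L^{[n]}=0$ for $k>n$ produces \eqref{wedge} in genus zero, where the factor $(1-q)^{-\chi(\cO_C)}$ reduces to $(1-q)^{-1}$; the analogous argument based on Theorem \ref{tb} gives \eqref{sym}.

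For \eqref{dual}, Theorem \ref{tc} furnishes the vanishing of $\chi((\bigwedge^{k_1} L^{[n]})^\vee \otimes \bigotimes_{j \geq 2} (\bigwedge^{k_j} M^{[n]})^\vee)$ whenever $\deg M \geq n$ and $\deg L \in \{\deg M - 1, \deg M\}$, while the $(k_1,\ldots,k_r) = 0$ piece contributes $\sum q^n \chi(\cO_{\mathsf{Quot}}) = (1-q)^{-1}$ in genus zero, by rationality of $\mathsf{Quot}_{\mathbb{P}^1}$. For $r = 1$ polynomiality in $\deg L$ immediately extends this vanishing to all $\deg L$. For $r \geq 2$ with independent line bundles $M_1, \ldots, M_r$, however, the locus on which Theorem \ref{tc} applies has positive codimension in $(\deg L, \deg M_1, \ldots)$-space, so polynomial interpolation alone cannot force identical vanishing; here one invokes the corresponding result of \cite{OS} directly.

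The arbitrary-genus forms of \eqref{wedge} and \eqref{dual}, which produce the full factor $(1-q)^{-\chi(\cO_C)}$, we cite from the equivariant localization computations of \cite{OS}. The main obstacle is precisely this passage from $\mathbb{P}^1$ to an arbitrary smooth curve $C$: Str\o{}mme's embedding of $\mathsf{Quot}_{\mathbb{P}^1}$ as a local complete intersection in a product of Grassmannians, central to the proofs of Theorems \ref{ta}--\ref{tc}, is specific to genus zero and has no direct analogue in higher genus. Our contribution to the corollary is thus a new, non-localization derivation of the $\mathbb{P}^1$ specializations, as well as the full statement \eqref{sym}, which is a genus zero identity.
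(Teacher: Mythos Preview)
Your genus-zero arguments via polynomial interpolation in $\deg L$ are sound for \eqref{wedge}, for \eqref{sym}, and for \eqref{dual} with $r=1$. But there is a genuine gap: for the arbitrary-genus statements \eqref{wedge} and \eqref{dual}, and already for \eqref{dual} in genus zero with $r \geq 2$ independent line bundles, you simply cite \cite{OS}. This is circular. The purpose of the Corollary is precisely to give a new, non-localization derivation of those formulas from \cite{OS} as consequences of Theorems~\ref{ta}--\ref{tc}; invoking \cite{OS} for the parts you cannot reach is not a proof of the Corollary but a restatement of it.

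The idea you are missing, and which you correctly identify as the obstacle, is the universality factorization. By standard cobordism-type arguments (see \cite{EGL, OS, stark} for the factorization itself, not for the values), the generating series in \eqref{wedge} and \eqref{dual} factor as
\[
\mathsf{A}^{\chi(\mathcal{O}_C)} \cdot \mathsf{B}^{\chi(L)} \qquad \text{and} \qquad \mathsf{A}^{\chi(\mathcal{O}_C)} \cdot \mathsf{B}_1^{\chi(M_1)} \cdots \mathsf{B}_r^{\chi(M_r)}
\]
for universal power series $\mathsf{A}, \mathsf{B}, \mathsf{B}_i$ depending only on $N$, not on $(C, L, M_i)$. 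The paper then specializes to $C = \mathbb{P}^1$ and varies the degree: for \eqref{wedge} one lets $\deg L = \ell \geq n$ and reads off $[q^n]\,\mathsf{A}\cdot\mathsf{B}^{\ell+1}$ from Theorem~\ref{ta}, which for infinitely many $\ell$ determines $\mathsf{A}$ and $\mathsf{B}$ uniquely; for \eqref{dual} one first sets all $M_i = M$, and then $M_1 = L$ with $\deg L = \deg M - 1$, and Theorem~\ref{tc} forces $\mathsf{A} = (1-q)^{-1}$ and $\mathsf{B}_1 = \cdots = \mathsf{B}_r = 1$. This simultaneously handles arbitrary genus through the exponent $\chi(\mathcal{O}_C)$ and, for \eqref{dual}, decouples the $M_i$ through the separate exponents $\chi(M_i)$. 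Your polynomial interpolation is a genus-zero shadow of this mechanism, but without the factorization you can neither leave $\mathbb{P}^1$ nor separate the line bundles.
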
	

We will show how to derive Corollary \ref{tos} from Theorems \ref{ta}, \ref{tb} and \ref{tc} in Section \ref{cortos}. 

Formulas \eqref{wedge}, \eqref{dual} and \eqref{sym} were previously established in \cite{OS} based on reduction to genus $0$ using universality statements as in \cite{EGL, OS, stark}, and equivariant torus localization in genus $0$. The localization calculation is combinatorially involved and relies on several mysterious simplifications. In the present paper, Theorems \ref{ta}, \ref{tb} and \ref{tc} reflect an efficient and  more conceptual approach to the full cohomology, which cannot be accessed by localization.

 In all genera, Corollary \ref{tos} also holds for an arbitrary vector bundle $E$ instead of the trivial bundle $\mathbb C^N\otimes \mathcal O_{\mathbb P^1}$, with the only modification that all instances of $N\chi(L)$ are replaced by $\chi(E\otimes L)$.

\subsection{Higher rank}\label{xxyy} A natural direction is to apply the techniques of this paper to study the cohomology of the Schur functors of tautological bundles over $\quot_{\, \mathbb P^1} (\mathbb C^N, n, r)$, when $r > 0.$ In this setting, even the numerical $K$-theoretic invariants of these Schur functors are largely unexplored. 

Recalling the definition of the tautological complex $L^{[n]}$ from \eqref{taut}, we propose the following conjectures.

\begin{conjecture}\label{cw}
  Let $n=(N-r)a+b$ with $0\leq b<N-r$. Then for all line bundles $L\to \mathbb P^1$, we have $$\chi\bigg(\quot_{\,\mathbb P^1}(\mathbb C^N, n, r), \bigwedge^k L^{[n]}\bigg)=\binom{N\chi(L)}{k}$$ for all $k\leq n+r(a+1).$ 
\end{conjecture}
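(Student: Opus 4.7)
The natural approach is to extend the Strømme-embedding strategy of this paper from $r=0$ to positive rank. For $n=(N-r)a+b$ with $0\leq b<N-r$, Strømme's embedding $\iota\colon \quot_{\,\mathbb P^1}(\mathbb C^N, n, r) \hookrightarrow \mathbf G_1 \times \mathbf G_2$ realizes $\quot$ as the zero locus of a regular section $\sigma$ of an explicit homogeneous bundle $\cE$ on a product of two Grassmannians, with the integer $a$ in the construction precisely the one appearing in the decomposition of $n$. The Koszul complex resolving $\iota_\star \cO_{\quot}$ then yields, for any coherent sheaf $\mathcal F$ on $\quot$ admitting an extension $\widetilde{\mathcal F}$ to $\mathbf G_1 \times \mathbf G_2$,
\[
  \chi(\quot, \mathcal F) \,=\, \sum_{j\geq 0}(-1)^j \,\chi\bigg(\mathbf G_1 \times \mathbf G_2, \widetilde{\mathcal F} \otimes \bigwedge^j \cE^\vee\bigg).
\]

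The first concrete step is to lift $L^{[n]}$ to a complex $\widetilde{L}^{[n]}$ of homogeneous bundles on $\mathbf G_1 \times \mathbf G_2$. In Strømme's setup the universal quotient $\cQ$ is the cokernel of an explicit two-term complex on $\mathbb P^1 \times (\mathbf G_1 \times \mathbf G_2)$ built from the tautological sub- and quotient bundles on the two Grassmannians twisted by $\cO_{\mathbb P^1}(a)$ and $\cO_{\mathbb P^1}(a+1)$, so tensoring by $p^*L$ and taking $R\pi_\star$ produces $\widetilde{L}^{[n]}$ as a two-term complex of homogeneous bundles. Its derived $k$-th exterior power decomposes by plethysm into Schur functors applied to these bundles, and combining this with the Koszul resolution writes $\chi(\quot, \bigwedge^k L^{[n]})$ as an alternating sum of Euler characteristics of fully homogeneous bundles on $\mathbf G_1\times \mathbf G_2$. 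Each summand is then computable by Borel--Weil--Bott, and the goal is to show, via a Littlewood--Richardson manipulation of the Weyl-character contributions from the two factors, that the alternating total collapses to $\binom{N\chi(L)}{k}$.

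The main obstacle is this final combinatorial step, which parallels the hardest part of the proofs of Theorems \ref{ta}, \ref{tb}, \ref{tc}. For $r=0$ the present paper shows that almost every term in the analogous resolution is individually acyclic, so Borel--Weil--Bott eliminates most contributions before any cancellation is needed; for $r>0$ many terms contribute nontrivially, and the binomial identity must emerge only after intricate sign cancellations between the two Grassmannian factors. The bound $k\leq n+r(a+1)$ is expected to enter as the precise range in which enough cancellation occurs to yield a pure binomial, with larger $k$ producing genuinely new Schur contributions to the Euler characteristic. A secondary subtlety, absent from the $r=0$ setting treated in this paper, is that $L^{[n]}$ is a vector bundle only when $\deg L$ is sufficiently positive; for general $L$ the $k$-th exterior power must be interpreted as a derived operation on the two-term lift $\widetilde{L}^{[n]}$, and one must verify that this derived operation is compatible with restriction to $\quot$ at the level of Euler characteristics.
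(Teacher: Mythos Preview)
This statement is a \emph{Conjecture} in the paper, not a theorem; the authors do not prove it. They write that ``Str{\o}mme's construction is valid for quotients of arbitrary rank, the Borel--Weil--Bott arguments become more involved and will be left for future study,'' and that the formula was verified by computer in several cases beyond $r=0$ and $r=N-1$. There is therefore no proof in the paper to compare your proposal against.

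Your outline is a reasonable description of the expected strategy, and you are candid that it is incomplete: you identify the ``main obstacle'' as the combinatorial cancellation step and do not carry it out. So what you have written is a research plan, not a proof. Two remarks on the plan itself. First, the mechanism you anticipate for $r>0$ --- that many Koszul terms contribute and the binomial emerges only after sign cancellation --- is a genuine departure from what happens for $r=0$ in this paper, where the point is precisely that \emph{no} cancellation is needed because Propositions~\ref{resolwedgep}--\ref{resolup} kill every higher term outright; you should not expect the $r=0$ argument to guide the combinatorics once cancellation is required. Second, your handling of the case when $L^{[n]}$ is only a complex is sketched rather than justified: the compatibility of derived exterior powers with restriction along $\iota$ and with the Koszul resolution is not automatic and would need a careful argument (or a reduction, via polynomiality of Euler characteristics in $\deg L$, to the range where $L^{[n]}$ is an honest bundle).
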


\noindent For symmetric powers, we state the following

\begin{conjecture} \label{cs}
  Let $n=(N-r)a+b$ with $0\leq b<N-r$. Then for all line bundles $L\to \mathbb P^1$, we have $$\chi\bigg(\quot_{\,\mathbb P^1}(\mathbb C^N, n, r), \Sym^k L^{[n]}\bigg)=\binom{N\chi(L)+k-1}{k}$$ for all $k\leq n+r(a+1).$
\end{conjecture}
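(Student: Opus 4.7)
The strategy is to extend the Koszul-resolution method of Theorems \ref{ta}--\ref{tc} to the positive-rank regime. The first step is to invoke Str\o mme's embedding
\[
  \iota \colon \quot_{\,\mathbb P^1}(\mathbb C^N, n, r) \hookrightarrow \mathbf G_1 \times \mathbf G_2
\]
from \cite{St}, which is available for every $r$ and realizes the Quot scheme as the zero locus of a regular section of an explicit homogeneous vector bundle $\mathcal E \to \mathbf G_1 \times \mathbf G_2$. The dimensions of $\mathbf G_1,\mathbf G_2$ and the decomposition of $\mathcal E$ into irreducible homogeneous pieces are governed by the Euclidean-division parameters $a,b$ in $n=(N-r)a+b$, which is the reason these parameters appear in the statement. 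One then expresses the tautological complex $L^{[n]} = R\pi_*(p^* L \otimes \mathcal Q)$ as the pullback under $\iota$ of a naturally defined two-term complex $\widetilde{L^{[n]}}$ on $\mathbf G_1 \times \mathbf G_2$, built from the tautological sub- and quotient bundles of the two factors and the cohomology of $L$.

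With this set-up in place, the Koszul resolution of $\iota_\star \mathcal O_{\quot}$ by $\bigwedge^\bullet \mathcal E^\vee$ yields
\[
  \chi\bigg(\quot, \Sym^k L^{[n]}\bigg) \;=\; \sum_{j\geq 0} (-1)^j\, \chi\bigg(\mathbf G_1 \times \mathbf G_2,\; \bigwedge^{j} \mathcal E^\vee \otimes \Sym^k \widetilde{L^{[n]}}\bigg),
\]
where $\Sym^k \widetilde{L^{[n]}}$ is to be interpreted as the derived symmetric power of the two-term complex and expanded via the Cauchy formula into a finite alternating sum of products of Schur functors applied to the constituent bundles. Each resulting summand is a fully homogeneous bundle on the ambient product, and its Euler characteristic is computed by Borel--Weil--Bott on each factor separately. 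The conjectural identity would then follow from a large Littlewood--Richardson cancellation collapsing the entire double alternating sum to the single term $\binom{N\chi(L)+k-1}{k}$.

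The main obstacle is precisely this combinatorial collapse. For $r=0$ the Koszul bundle $\mathcal E$ and the ambient product are considerably simpler, and the interplay of Borel--Weil--Bott and Littlewood--Richardson vanishings developed in Section \ref{s3} is already delicate but ultimately tractable. For $r\geq 1$, $\mathcal E$ has a substantially richer decomposition as a sum of tensor products of homogeneous bundles on \emph{both} factors, and the bound $k\leq n+r(a+1)$, which equals the rank of $L^{[n]}$ for the critical twist $\deg L=a$, must be used in an essential way to prevent spurious corrections coming from the derived symmetric power when the complex $\widetilde{L^{[n]}}$ is not concentrated in a single cohomological degree. Reasonable intermediate targets are the case $k=1$, which reduces to a Grothendieck--Riemann--Roch computation for $L^{[n]}$, and the boundary case $N-r=1$, in which one of the Grassmannians collapses and the combinatorics becomes one-sided. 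An independent numerical sanity check is provided by torus localization on $\mathbb P^1$ in the spirit of \cite{OS}, although the intricate simplifications it produces are precisely the obstruction that the Koszul approach is designed to circumvent.
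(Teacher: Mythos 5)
The statement you are addressing is Conjecture~\ref{cs}, not a theorem: the paper does not prove it, and explicitly says so. The authors remark that for $r=0$ it recovers Theorem~\ref{tb}, that $r=N-1$ can be checked by hand because the Quot scheme is a projective space, that several other cases were verified by computer, and that \emph{``While Str{\o}mme's construction is valid for quotients of arbitrary rank, the Borel--Weil--Bott arguments become more involved and will be left for future study.''} So there is no paper proof to compare against.

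Your proposal is a strategy outline, not a proof, and you say so yourself (``The main obstacle is precisely this combinatorial collapse''). That is the genuine gap: the entire content of the conjecture is the claim that the double alternating sum over Koszul degree $j$ and the Cauchy/Littlewood--Richardson expansion of $\Sym^k$ of the two-term complex $\widetilde{L^{[n]}}$ telescopes to the single binomial $\binom{N\chi(L)+k-1}{k}$, and nothing in your write-up establishes this. For $r=0$ the paper's proof of Theorem~\ref{tb} does not go by cancellation at all: it shows that every term $\mathcal W_\ell$ with $\ell\ge 1$ of the resolution has \emph{all} cohomology vanishing (Propositions~\ref{resolsymp} and \ref{vanishsym}), so only the $\ell=0$ term contributes and no cancellation is needed. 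Whether an analogous termwise vanishing holds for $r>0$, or whether one is genuinely forced into tracking cancellations, is exactly the open question. Two further concrete issues you gloss over: (i) for $r>0$ the complex $L^{[n]}$ is not a bundle, so $\Sym^k L^{[n]}$ must be taken in the derived sense and one needs a careful model for $\Sym^k$ of a two-term complex of bundles on $\mathbf G_1\times\mathbf G_2$ compatible with pullback under $\iota$; this you flag but do not resolve. (ii) You do not verify the proposal even in the cases the authors say are accessible (the $r=N-1$ projective-space case, or $k=1$ by Grothendieck--Riemann--Roch), which would at least constitute a partial result. As written, the proposal restates the conjecture's plausibility and identifies the expected method, but it does not close the gap that the authors themselves leave open.
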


\noindent Finally, for the dualized exterior powers, we have 

\begin{conjecture}
  Let $r>0$ and write $n=ar+b$ with $0\leq b<r$. Let $1\leq t\leq N-r-1$ and $p_1, \ldots, p_t$ be nonnegative integers with $0<p_1+\cdots+p_t\leq n+(N-r)(a+1)$. Then for all line bundles $L_1, \ldots, L_t\to \mathbb P^1$, we have
  \[
    \chi\bigg(\quot_{\,\mathbb P^1}(\mathbb C^N, n, r), \big(\bigwedge^{p_1} L_1^{[n]}\big)^{\vee}\otimes \cdots \otimes \big(\bigwedge^{p_t} L_t^{[n]}\big)^{\vee}\bigg)=0.
  \]
\end{conjecture}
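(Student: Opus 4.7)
The plan is to first leverage universality to reduce to a vanishing statement for high-degree line bundles, and then to extend the Grassmannian embedding technique of this paper from $r=0$ to arbitrary $r$.

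Universality arguments in the spirit of \cite{EGL, OS, stark} should establish that the Euler characteristic
\[
\chi\Big(\quot_{\,\mathbb P^1}(\mathbb C^N, n, r), \bigotimes_{i=1}^m (\bigwedge^{k_i} L_i^{[n]})^{\vee}\Big)
\]
depends polynomially on $\deg L_1, \ldots, \deg L_m$. It therefore suffices to exhibit the vanishing after specializing each $L_i$ to a line bundle of sufficiently large degree; in this range, each $L_i^{[n]}$ is a genuine vector bundle of rank $n+r(\deg L_i+1)$, and one would in fact aim for the stronger statement $H^j=0$ for all $j\geq 0$, mirroring Theorem~\ref{tc}.

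With this reduction in hand, Str{\o}mme's embedding $\iota\colon \quot_{\,\mathbb P^1}(\mathbb C^N, n, r)\hookrightarrow \mathbf G_1\times \mathbf G_2$, established in \cite{St} for all ranks $r$, realizes the Quot scheme as the zero locus of a regular section of an explicit homogeneous vector bundle $\mathcal E$. Tensoring the Koszul complex $\bigwedge^{\bullet}\mathcal E^\vee$ with the relevant dualized tautological bundles --- which in the high-degree regime pull back from $\GL_N$-equivariant bundles on $\mathbf G_1\times\mathbf G_2$ --- yields a resolution of $\iota_{\star}\big(\bigotimes_i (\bigwedge^{k_i} L_i^{[n]})^\vee\big)$ by homogeneous bundles. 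One then analyzes each term via the Borel--Weil--Bott theorem and the Littlewood--Richardson rule, in close parallel with the combinatorial arguments of Section~\ref{s3}.

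The principal obstacle is combinatorial. For $r=0$, Theorem~\ref{tc} rests on the striking fact that \emph{every} term of the tensored Koszul complex has vanishing cohomology. For $r>0$ the analogous bookkeeping becomes considerably more intricate, since the two Grassmannians are larger and the Borel--Weil--Bott vanishing must be read against the thresholds $m\leq N-r-1$ and $\sum_{i=1}^m k_i\leq n+(N-r)(a+1)$ that appear in the conjecture. These bounds are expected to be sharp --- they mark precisely the range before Serre duality forces a nonzero Euler characteristic --- so one anticipates an inductive approach on $\sum k_i$ together with careful control of the dominant weights against the singular hyperplanes of the Weyl chamber of $\mathbf G_1\times \mathbf G_2$. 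A secondary challenge is to track the sign patterns and weight shifts introduced by dualization uniformly across all $m$ tensor factors, where the Schur functor combinatorics of $(\bigwedge^{k_i} L_i^{[n]})^\vee$ interacts in a more involved way with the Koszul differential than in the single-bundle case treated in the paper.
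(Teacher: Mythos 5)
This statement is Conjecture~1.7 (the third higher-rank conjecture); the paper does not prove it. The authors say explicitly that ``while Str{\o}mme's construction is valid for quotients of arbitrary rank, the Borel--Weil--Bott arguments become more involved and will be left for future study,'' so there is no in-paper proof to compare against.

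More to the point, what you have written is a program, not a proof, and you say so yourself. The universality/polynomiality reduction to large degrees is reasonable as a first step, and the Str{\o}mme embedding does indeed exist for all $r$, so the resolution by Koszul terms over $\mathbf G_1\times\mathbf G_2$ makes sense. But the entire mathematical content of the $r=0$ argument for Theorem~\ref{tc} lies in Propositions~\ref{resolup} and~\ref{vanishdual}: the precise combinatorial claim that, after Cauchy-decomposing $\bigwedge^\ell\mathcal E^\vee$, every summand has vanishing cohomology, controlled via the $n$-index / $(k,n)$-index of the partitions involved. Your sketch neither states nor proves the analogous claim for $r>0$; you flag it as ``the principal obstacle'' and ``considerably more intricate,'' appeal to ``an inductive approach on $\sum k_i$,'' and speak of a ``secondary challenge'' in tracking dualizations, but you carry out none of this. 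No higher-rank analogue of the index notion is defined, no bound on the relevant partitions is extracted from the rectangle containing $\lambda$, and no verification is offered that the thresholds $m\le N-r-1$ and $\sum k_i\le n+(N-r)(a+1)$ actually arise from the Borel--Weil--Bott hyperplanes. Until that combinatorics is nailed down, the vanishing is not established.

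There is also a gap in the reduction step that you should confront explicitly. For $r=0$, the paper does not rely on naive polynomiality: formula~\eqref{dual} is deduced from the factorization $\mathsf A^{\chi(\mathcal O_C)}\prod_i\mathsf B_i^{\chi(M_i)}$ of the full generating series in $n$, together with two carefully chosen specializations of the $M_i$ (all equal, then one shifted by a single degree). This is needed because the Str{\o}mme embedding $\iota_m$ only realizes $L^{[n]}$ and $M^{[n]}$ as pullbacks of $\mathcal B_1,\mathcal B_2$ when $\deg L,\deg M$ differ by at most one; the set of such degree tuples is not Zariski dense, so polynomiality alone does not extend the vanishing to arbitrary $L_i$. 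You would need to formulate and justify the analogous multiplicative universality for $\quot_{\,\mathbb P^1}(\mathbb C^N,n,r)$, $r>0$, and then show that the constrained specializations available via the embedding suffice to pin down all the universal factors. As it stands, the proposal does not resolve the conjecture.
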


When $r=0$, Conjectures \ref{cw} and \ref{cs} recover Theorems \ref{ta}\,(1) and \ref{tb}\,(1), while the case $r=N-1$ can be verified by hand since the Quot scheme is a projective space. For all three conjectures, we checked the answer by computer in several other cases. The bounds on $k$ appear to be sharp.

It is natural to expect that the three conjectures can be lifted in obvious fashion to cohomology. This suggests that Theorem \ref{ta}\,(1), \ref{tb}\,(1) and \ref{tc} continue to hold for higher rank quotients subject to the appropriate bounds on $k$ and the condition that the degree of the $L$'s be nonnegative. While Str{\o}mme's construction is valid for quotients of arbitrary rank, the Borel--Weil--Bott arguments become more involved and will be left for future study.

The answers predicted by the conjectures stabilize as $n$ becomes large with respect to $N, k, r$. Equivalently, for each fixed $k$, the generating series $$\sum_{n=0}^{\infty} q^n \chi\bigg(\quot_{\,\mathbb P^1}(\mathbb C^N, n, r), \bigwedge^k L^{[n]}\bigg), \quad \sum_{n=0}^{\infty} q^n \chi\left(\quot_{\,\mathbb P^1}(\mathbb C^N, n, r), \Sym^k L^{[n]}\right)$$ are given by rational functions with (simple) pole only at $q=1$. It is natural to wonder whether this statement is correct for all partitions and for the associated Schur functors of $L^{[n]}$. 

\subsection {Subsequent developments} After our preprint appeared on \texttt{arxiv}, the conjectural identity \eqref{spec} was extended to cover more general Ext groups involving tensor products of wedge powers of tautological bundles, see \cite [Conjecture 1.1]{Krug}. The extended conjecture is consistent with the Euler characteristic calculations in \cite {OS}. Theorem \ref{ta} and Theorem \ref{tc} establish part of the conjecture. We also refer the reader to \cite [Theorems 1.2, 1.3, 1.5]{Krug} for related results. Both equation \eqref{spec} and the conjecture formulated in \cite {Krug} were very recently confirmed in \cite {MN} using different methods.

\subsection{Plan of the paper.} We review Str{\o}mme's embedding, construct the resolutions of the tautological bundles, and establish Theorems \ref{ta}, \ref{tb} and \ref{tc} in Section \ref{s2}. This relies on the Borel--Weil--Bott analysis of the resolutions which is carried out in Section \ref{s3}. Corollaries \ref{tos} and \ref{t2} are proved in Section \ref{cor}.

\subsection{Acknowledgements} We thank Shubham Sinha and Jerzy Weyman for useful related conversations. A.M. was supported by NSF grant DMS 1902310, D.O. was supported by NSF grant DMS 1802228, and S.S. was supported by NSF grant DMS 1812462. 

\section {Grassmannian embedding of the Quot scheme and resolutions}\label{s2}

\subsection{Str{\o}mme's embedding} \label{semb}

We begin by describing Str{\o}mme's construction which exhibits the Quot scheme over $\mathbb P^1$ as the zero locus of a regular section of a vector bundle over the product of two Grassmannians \cite {St}.

For each integer $m\geq n$, the embedding takes the form
\[
  \iota_m \colon \quot_{\,\mathbb P^1 }(\mathbb C^N, n)\hookrightarrow {\mathbf G}(V_{m-1}, n)\times {\mathbf G}(V_m, n),
\]
where $$\mathbf G_1={\mathbf G}(V_{m-1}, n), \quad \mathbf G_2={\mathbf G}(V_m, n)$$ are the Grassmannians of $n$-dimensional {\it quotients} of two vector spaces of dimensions $Nm$ and $N(m+1)$ respectively. 
We identify $$V_{m-1}=H^0(\mathbb P^1, \mathcal O_{\mathbb P^1}(m-1)^{\oplus N}), \quad V_m=H^0(\mathbb P^1, \mathcal O_{\mathbb P^1}(m)^{\oplus N}).$$ 
Explicitly, $\iota_m$ is the product of two Grothendieck embeddings. When $m\geq n$, each short exact sequence in the Quot scheme $$0\to S\to \mathbb C^N\otimes \mathcal O_{\mathbb P^1}\to Q\to 0,$$ yields two exact sequences of vector spaces
\[
  0\to H^0(S(m-1))\to H^0(\mathcal O_{\mathbb P^1}(m-1)^{\oplus N})\to H^0(Q(m-1))\to 0,
\]
\[
  0\to H^0(S(m))\to H^0(\mathcal O_{\mathbb P^1}(m)^{\oplus N})\to H^0(Q(m))\to 0.
\]
Then $\iota_m$ is given by the assignment
\[
  \left[0\to S\to \mathbb C^N\otimes \mathcal O_{\mathbb P^1}\to Q\to 0\right]\mapsto \left[V_{m-1}\to H^0(Q(m-1))\right]\times \left[V_m\to H^0(Q(m))\right].
\]

We write $W=H^0(\mathbb P^1, \mathcal O_{\mathbb P^1}(1))$. There is a natural morphism $$V_{m-1}\to V_m\otimes W,$$ obtained from the natural section cutting out the diagonal $\Delta\hookrightarrow \mathbb P^1\times \mathbb P^1$: $$ \mathcal O_{\mathbb P^1\times \mathbb P^1}\to \mathcal O_{\mathbb P^1\times \mathbb P^1}(\Delta)=\mathcal O_{\mathbb P^1}(1)\boxtimes \mathcal O_{\mathbb P^1}(1),$$ tensoring by $\mathcal O_{\mathbb P^1}(m-1)$ on the first factor, and taking cohomology. 

To describe the equations cutting out $\quot_{\,\mathbb P^1 }(\mathbb C^N, n)$ in $\mathbf G_1\times \mathbf G_2$, let 
$$0 \to \mathcal A_1 \to V_{m-1} \otimes \mathcal O_{\mathbf G_1} \to \mathcal B_1\to 0,$$
$$0 \to \mathcal A_2 \to V_m \otimes \mathcal O_{\mathbf G_2} \to \mathcal B_2\to 0$$ 
be the tautological sequences over the two Grassmannians $\mathbf G_1$ and $\mathbf G_2$. 
Let $\rm{pr}_1$ and $\rm{pr}_2$ be the two projections on ${\bf G}_1\times {\bf G}_2$. The sheaf 
\begin{equation}
\label{e}
\mathcal E=\text{pr}_1^\star \mathcal A_1^{\vee}\otimes W \otimes\text{pr}_2^{\star}\,\mathcal B_2\to \mathbf G_1\times \mathbf G_2
\end{equation}
admits a natural section $\sigma$ induced by the composition
\[
  \sigma \colon \text{pr}_1^{\star}\mathcal A_1\to V_{m-1}\otimes \mathcal O_{\mathbf G_1\times \mathbf G_2}\to V_m\otimes W\otimes \mathcal O_{\mathbf G_1\times \mathbf G_2}\to \text{pr}_2^{\star}\, \mathcal B_2 \otimes W.
\]
Str{\o}mme shows that the section $\sigma$ is regular and vanishes exactly along $\quot_{\,\mathbb P^1 }(\mathbb C^N, n)$, see \cite[\S 4]{St}. 

\begin{rmk} \label{rembd}For an arbitrary vector bundle $E\to \mathbb P^1$ and $m$ sufficiently large, we similarly have an embedding
  \[
    \iota \colon \quot_{\mathbb P^1}(E, n)\to {\bf G}_1 \times {\bf G}_2, \quad \mathbf G_1={\mathbf G}(V_{m-1}, n), \quad \mathbf G_2={\mathbf G}(V_m, n)
  \]
  where $$V_{m-1}=H^0(E(m-1)), \quad V_m=H^0(E(m)).$$ 
To obtain a precise lower bound for $m$, note that we need to ensure the vanishing
$$H^1(S(m-1))=H^1(S(m))=0,$$
for every exact sequence $0\to S\to E\to Q\to 0.$ Let $$E=\bigoplus_{i=1}^{N}\mathcal O_{\mathbb P^1}(a_i),$$ and set $a$ to be the largest of the summand degrees $a_i, 1 \leq i \leq N.$
If $$S=\bigoplus_{i=1}^{N} \mathcal O_{\mathbb P^1}(s_i),$$ then we have $s_i \leq a, \, 1 \leq i \leq N,$
since there is an injection
$$
\mathcal O_{\mathbb P^1}(s_i)\to S\to E\to \bigoplus_{i=1}^{N} \mathcal O_{\mathbb P^1}(a).
$$
As $\sum_{i=1}^{N} s_i=\deg E - n,$ we also have 
$$s_i \geq \deg E  - n - (N-1) a, \, \, \, 1 \leq i \leq N.$$ Thus for 
\begin{equation*} \label{bound}
m \geq n + (N-1) a - \deg E,
\end{equation*} 
we obtain $H^1 (S (m-1)) = H^1(S(m))=0$, as wished. 

With no additional conditions on $m$, Str{\o}mme's arguments show that $\quot_{\mathbb P^1}(E, n)$ is also cut out by a regular section $\sigma$ of the tautological bundle \eqref{e} over ${\mathbf G}_1\times {\mathbf G}_2$. Consequently, the arguments below presented for the case of trivial $E$ also carry over without change to arbitrary $E$. 
\end{rmk} 

\subsection{Resolutions}\label{res}
As a result of the above discussion, the section $\sigma$ induces a Koszul resolution
\begin{equation}\label{kos}
  \cdots\to \bigwedge^2\mathcal E^{\vee}\to \bigwedge^{1} \mathcal E^{\vee} \to \mathcal O_{{\mathbf G}_1\times {\mathbf G}_2}\to \mathcal O_{\mathsf{Quot}}\to 0.
\end{equation}
Note that if $\deg L=m$, by the definition of the embedding $\iota_m$ we have
\[
  L^{[n]}=\iota_m^{\star}\,{\rm pr}_2^{\star}\, \mathcal B_2.
\]
Hence, tensoring \eqref{kos} with ${\rm pr}_2^\star \bigwedge^k \cB_2$, we obtain the resolution 
\[
  \cdots\to \bigwedge^2\mathcal E^{\vee}\otimes \text{pr}_2^{\star} \bigwedge^k \mathcal B_2\to \bigwedge^{1} \mathcal E^{\vee} \otimes \text{pr}_2^{\star} \bigwedge^k\mathcal B_2\to  \text{pr}_2^{\star} \bigwedge^k \mathcal B_2\to \left(\iota_m\right)_{\star}\bigg(\bigwedge^k L^{[n]}\bigg)\to 0
\]
over ${\mathbf G_1}\times \mathbf G_2$. We set 
$$
\mathcal V_{\ell} = \bigwedge^\ell\mathcal E^{\vee}\otimes \text{pr}_2^{\star} \bigwedge^k \mathcal B_2, \quad \ell\geq 0.
$$ This yields the resolution 

\begin{equation}\label{resolwedge}
      \cdots \to \mathcal V_{2}\to \mathcal V_{1}\to \mathcal V_0\to \left(\iota_m\right)_{\star}\bigg(\bigwedge^k L^{[n]}\bigg)\to 0,
    \end{equation}
    corresponding to Theorem \ref{ta}\,(1). 
    
    Theorem \ref{ta}\,(2) is conceptually analogous, but the notation becomes slightly more involved. For this reason, we believe it is helpful to present the simpler case (1) first. To treat case (2), we consider the bundle
    \begin{subequations} \begin{equation}\label{bunf} \mathcal F=\left(\bigwedge^{p_1} L^{[n]}\right)^{\vee}\otimes \cdots \otimes \left(\bigwedge^{p_t} L^{[n]}\right)^{\vee}\otimes \left(\bigwedge^{k} L^{[n]}\right).\end{equation} By similar reasoning, we are led to the resolution 
    \begin{equation}\label{resolut}
  \cdots\to \mathcal X_2\to \mathcal X_1\to \mathcal X_0\to \left(\iota_{m}\right)_{\star} \mathcal F\to 0,\end{equation}
  \end{subequations}
where
\[
  \quad \mathcal X_{\ell}=\bigwedge^{\ell} \mathcal E^{\vee} \otimes  \bigg({\rm{pr}}_2^{\star} \bigwedge^{p_1} \mathcal B_2^{\vee}\otimes {\rm{pr}}_2^{\star} \bigwedge^{p_2} \mathcal B_2^{\vee} \otimes \cdots \otimes {\rm{pr}}_2^{\star} \bigwedge^{p_t} \mathcal B_2^{\vee}\otimes \text{pr}_2^{\star} \bigwedge^{k} \mathcal B_2\bigg). \]
  
\begin{proposition}\label{resolwedgep}
  \begin{subeqns}
    In the resolution \eqref{resolwedge}
    \begin{equation*}
      \cdots \to \mathcal V_{2}\to \mathcal V_{1}\to \mathcal V_0\to \left(\iota_m\right)_{\star}\bigg(\bigwedge^k L^{[n]}\bigg)\to 0,
    \end{equation*}
    the sheaves $\mathcal V_{\ell}$ have no cohomology for $\ell\geq 1$, while the sheaf $\mathcal V_0$ has no higher cohomology. 
    
    More generally, in the resolution \eqref{resolut}, the sheaves $\mathcal X_\ell$ have no cohomology for $\ell\geq 1$, while the sheaf $\mathcal X_0$ has no higher cohomology. 
  \end{subeqns}
\end{proposition}
  
For the symmetric products, we similarly define
\[
  \mathcal W_{\ell}=\bigwedge^\ell\mathcal E^{\vee}\otimes \text{pr}_2^{\star} \,\Sym^k \mathcal B_2, 
\]
and have an analogous resolution \begin{equation}\label{resolsymr}
  \cdots \to \mathcal W_{2}\to \mathcal W_{1}\to \mathcal W_0\to \left(\iota_m\right)_{\star}\left(\Sym^k L^{[n]}\right)\to 0. 
\end{equation} This corresponds to Theorem \ref{tb}\,(1). For the more general Theorem \ref{tb}\,(2), we let 
\begin{subequations}
\begin{equation}\label{bung}\mathcal G=\left(\bigwedge^{p_1} L^{[n]}\right)^{\vee}\otimes \cdots \otimes \left(\bigwedge^{p_t} L^{[n]}\right)^{\vee}\otimes \text{Sym}^{k} L^{[n]}.\end{equation} Setting \[
  \quad \mathcal Y_{\ell}=\bigwedge^{\ell} \mathcal E^{\vee} \otimes  \bigg({\rm{pr}}_2^{\star} \bigwedge^{p_1} \mathcal B_2^{\vee}\otimes {\rm{pr}}_2^{\star} \bigwedge^{p_2} \mathcal B_2^{\vee} \otimes \cdots \otimes {\rm{pr}}_2^{\star} \bigwedge^{p_t} \mathcal B_2^{\vee}\otimes \text{pr}_2^{\star}\, \text{Sym}^{k} \mathcal B_2\bigg), \] we obtain the resolution \begin{equation}\label{resolus}
  \cdots\to \mathcal Y_2\to \mathcal Y_1\to \mathcal Y_0\to \left(\iota_{m}\right)_{\star} \mathcal G\to 0.\end{equation}
  \end{subequations}
\begin{proposition}\label{resolsymp}
  \begin{subeqns}
    In the resolution \eqref{resolsymr} 
  \begin{equation*}
  \cdots \to \mathcal W_{2}\to \mathcal W_{1}\to \mathcal W_0\to \left(\iota_m\right)_{\star}\left(\Sym^k L^{[n]}\right)\to 0,
\end{equation*}
the sheaves $\mathcal W_{\ell}$ have no cohomology if $\ell\geq 1$ and $\deg L\geq n\geq k$, while $\mathcal W_0$ has no higher cohomology. 

More generally, under the same assumptions, in the resolution \eqref{resolus}, the sheaves $\mathcal Y_\ell$ have no cohomology for $\ell\geq 1$, while the sheaf $\mathcal Y_0$ has no higher cohomology. 
\end{subeqns}
\end{proposition}

A further analysis is needed for Theorem \ref{tc}. The case $L=M$ is already covered either by Theorem \ref{ta}\,(2) or Theorem \ref{tb}\,(2) for $k=0$. Thus we may assume $\deg M=m\geq n$ and $\deg L=m-1$. In this case, we have $$L^{[n]}=\iota_m^{\star} \,{\rm {pr}}_1^{\star}\, \mathcal B_1,\quad M^{[n]}=\iota_m^{\star} \,\rm{pr}_2^{\star}\,\mathcal B_2.$$
Thus for the bundle 
\[
  \mathcal H=\bigg(\bigwedge^{p_1} L^{[n]}\bigg)^{\vee}\otimes \bigg(\bigwedge^{p_2} M^{[n]}\bigg)^{\vee}\otimes \cdots \otimes \bigg(\bigwedge^{p_t} M^{[n]}\bigg)^{\vee}
\]
which appears in Theorem \ref{tc}, we obtain a resolution
\begin{equation}\label{resolu}
  \cdots\to \mathcal U_2\to \mathcal U_1\to \mathcal U_0\to \left(\iota_{m}\right)_{\star} \mathcal H\to 0,\end{equation}
where
\[
  \quad \mathcal U_{\ell}=\bigwedge^{\ell} \mathcal E^{\vee} \otimes  \bigg({\rm{pr}}_1^{\star}\bigwedge^{p_1} \mathcal B_1^{\vee}\otimes {\rm{pr}}_2^{\star} \bigwedge^{p_2}\mathcal B_2^{\vee}\otimes \cdots \otimes {\rm{pr}}_2^{\star} \bigwedge^{p_t} \mathcal B_2^{\vee}\bigg).
\]

\begin{proposition}\label{resolup} For $p_1, \ldots, p_t$ not all zero, the cohomology of $\mathcal U_{\ell}$ vanishes for all $\ell\geq 0$. 
\end{proposition}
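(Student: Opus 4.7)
The plan is to combine the K\"unneth formula on $\mathbf G_1 \times \mathbf G_2$ with Borel--Weil--Bott on each Grassmannian factor, using a Cauchy-type decomposition of $\bigwedge^\ell \mathcal E^\vee$ to reduce the computation to external tensor products of Schur functors, with the Littlewood--Richardson rule then controlling the resulting products of Schur functors on each factor.

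First, I would apply Cauchy's identity to $\mathcal E^\vee = \text{pr}_1^\star \mathcal A_1 \otimes W^\vee \otimes \text{pr}_2^\star \mathcal B_2^\vee$ to decompose $\bigwedge^\ell \mathcal E^\vee$ as a direct sum of bundles of the shape $\text{pr}_1^\star S_\alpha \mathcal A_1 \otimes \text{pr}_2^\star S_\beta \mathcal B_2^\vee$, with multiplicities involving representations of $GL(W^\vee)$; since $\dim W^\vee = 2$, only partitions with at most two columns contribute on that side. Tensoring with the remaining factors of $\mathcal U_\ell$ and invoking K\"unneth, the vanishing $H^\bullet(\mathcal U_\ell) = 0$ reduces to showing that for every summand at least one of
\[
H^\bullet\Bigl(\mathbf G_1,\, S_\alpha \mathcal A_1 \otimes \bigwedge^{k_1}\mathcal B_1^\vee\Bigr) \quad \text{or} \quad H^\bullet\Bigl(\mathbf G_2,\, S_\beta \mathcal B_2^\vee \otimes \bigotimes_{i=2}^{r} \bigwedge^{k_i}\mathcal B_2^\vee\Bigr)
\]
vanishes.

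On $\mathbf G_2 = \mathbf G(V_m, n)$, the Littlewood--Richardson rule expands the product $S_\beta \mathcal B_2^\vee \otimes \bigotimes_i \bigwedge^{k_i}\mathcal B_2^\vee$ as $\bigoplus c_\gamma\, S_\gamma \mathcal B_2^\vee$, and Borel--Weil--Bott gives a sharp combinatorial non-vanishing criterion for each $S_\gamma \mathcal B_2^\vee$ in terms of the entries of $\gamma$ and the large quantity $\dim V_m - n = N(m+1) - n$. A parallel Borel--Weil--Bott analysis on $\mathbf G_1$ treats $S_\alpha \mathcal A_1 \otimes \bigwedge^{k_1}\mathcal B_1^\vee$, where the interaction between the Schur functor of the sub-bundle $\mathcal A_1$ and the factor $\bigwedge^{k_1}\mathcal B_1^\vee$ on the quotient side produces an analogous explicit criterion. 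For the modified resolution in the case $L = M$, the factor $\bigwedge^{k_1}\mathcal B_1^\vee$ is replaced by an extra copy of $\bigwedge^{k_1}\mathcal B_2^\vee$ on $\mathbf G_2$, which is absorbed into the Littlewood--Richardson product on that side and permits an essentially identical argument with all the combinatorics consolidated on one Grassmannian.

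The principal obstacle is the combinatorial coordination between the two sides: one must verify that the $\mathbf G_1$ and $\mathbf G_2$ non-vanishing conditions cannot hold simultaneously for any pair $(\alpha, \beta)$ arising in the Cauchy decomposition, as long as $(k_1, \ldots, k_r) \neq 0$. This is precisely the ``delicate interplay between Borel--Weil--Bott and Littlewood--Richardson'' highlighted in the introduction; I expect the argument to use the non-triviality of the tuple $(k_1, \ldots, k_r)$ to force a repeated entry in the shifted weight sequence for $\gamma$ (or its $\mathbf G_1$-analogue), ensuring that on every summand the critical Grassmannian contributes zero cohomology.
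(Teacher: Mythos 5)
Your high-level framework -- Cauchy decomposition, K\"unneth, Borel--Weil--Bott on each Grassmannian factor, Littlewood--Richardson to expand the tensor products -- matches the paper's scaffolding. But the proposal is a plan with the load-bearing step left unexecuted, and the step you defer is precisely the content of the proposition.

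Two concrete gaps. First, your proposed Cauchy decomposition over $GL(W^\vee)$ leads naturally to Kronecker-type multiplicities for $\bS_\lambda(W^\vee\otimes\mathcal B_2^\vee)$, which are intractable; the paper instead notices that since $W$ is a fixed $2$-dimensional space, $W^\vee\otimes\mathcal B_2^\vee\cong\mathcal B_2^\vee\oplus\mathcal B_2^\vee$, so only a two-variable Cauchy and ordinary Littlewood--Richardson coefficients for $\bS_\lambda(\mathcal B_2^\vee\oplus\mathcal B_2^\vee)$ ever appear. (Also, partitions with at most two \emph{rows}, not columns, index $GL(W^\vee)$-representations.) Second, and more seriously, you identify ``the combinatorial coordination between the two sides'' as the obstacle and then guess at its resolution, but your guess is off: the hypothesis that $(k_1,\dots,k_r)\neq 0$ is used only for the $\ell=0$ term $\mathcal U_0$. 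For $\ell\ge 1$ the argument has nothing to do with the $k_i$ being nonzero; it rests on a dichotomy for the single Cauchy index $\lambda$: either some column length of $\lambda$ falls in the ``forbidden band'' $[j,n+j-1]$, in which case $\bS_{\lambda^\dagger}(\mathcal A_1)$ (or its twist by $\bigwedge^{k_1}\mathcal B_1^\vee$, via condition \eqref{conditionplus}) already has no cohomology on $\mathbf G_1$; or $\lambda$ has a well-defined $n$-index (resp.\ $(k_1,n)$-index) $i$, and then one shows via Propositions \ref{vanishwedge}--\ref{vanishdual} that every summand $\bS_\delta(\mathcal B_2^\vee)$ of the $\mathbf G_2$-factor satisfies $i\le\delta_i\le d-n+i-1$ and hence vanishes. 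Making that second branch work requires the rectangle bound that $\lambda$ has at most $\dim V_m-n-r-1$ columns, which is exactly where the constraint $r\le N-1$ enters, and requires the auxiliary lemmas controlling $\alpha_i$, $\gamma_i$, and $\delta_i$ through the Littlewood--Richardson rule. None of this machinery -- the notion of $n$-index and $(k,n)$-index, the rectangle bound, the lemmas pinning down the partition entries -- appears in your proposal, so as written it does not establish the claim.
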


\subsubsection{The main theorems} \label{rmn}Before turning our attention to the proofs of the above propositions, we note that our main Theorems \ref{ta}, \ref{tb} and \ref{tc} follow immediately from them. \vskip.1in

For Theorem \ref{ta}, we use Proposition \ref{resolwedgep}. To establish case (1) of the theorem, we make use of the resolution \eqref{resolwedge}. The associated spectral sequence shows that the higher cohomology of $\bigwedge^k L^{[n]}$ vanishes, while in degree zero we have 
\[
  H^0\bigg(\mathsf{Quot}_{\,\mathbb P^1}(\mathbb C^N, n), \bigwedge^k L^{[n]}\bigg)=H^0(\mathbf G_1\times \mathbf G_2, \mathcal V_0).
\]
Recalling that $\mathcal V_0=\text{pr}_2^{\star} \, \bigwedge^k\mathcal B_2$, we compute 
\begin{align*}
  H^0(\mathbf G_1\times \mathbf G_2, \mathcal V_0)=H^0\bigg(\mathbf G_2, \bigwedge^k \mathcal{B}_2\bigg)=\bigwedge^k V_m=\bigwedge^k H^0(\mathbb C^N \otimes \mathcal O_{\mathbb P^1}(m))=\bigwedge^k H^0(L^{\oplus N}),
\end{align*} as needed. 

For part (2) of Theorem \ref{ta}, we note that $$\textnormal{Ext}^{i} \bigg(\bigwedge^{p_1} L^{[n]}\otimes \cdots \otimes \bigwedge^{p_t} L^{[n]}, \bigwedge^{k} L^{[n]}\bigg)=H^i(\mathcal F),$$ where the bundle $\mathcal F$ is defined in \eqref{bunf}. Using the resolution \eqref{resolut} and Proposition \ref{resolwedgep}, we see that the only contribution to the cohomology of $\mathcal F$ comes from the term \begin{align*}H^0({\bf G}_1\times {\bf G}_2, \mathcal X_0)=H^0\bigg ({\bf G}_2, \bigwedge^{p_1}\mathcal B_2^{\vee}\otimes \cdots \otimes \bigwedge^{p_t} \mathcal B_2^{\vee}\otimes \bigwedge^k \mathcal B_2\bigg)=\bigwedge^{k - |p|} H^0 ({\bf G}_2, \mathcal B_2)=\bigwedge^{k- |p|} H^0(L^{\oplus N})\end{align*} for $|p|=p_1+\cdots+p_t\leq k\leq n$. The second equality requires further explanation. Since we need additional notation, the argument will be presented later, see equation \eqref{boomboom} in the proof of Proposition \ref{resolwedgep} below. \vskip.1in

Turning to Theorem \ref{tb}, for part (1), we make use of the resolution \eqref{resolsymr} and Proposition \ref{resolsymp}. This time, the initial term $\mathcal W_0=\text{pr}_2^{\star}\, \Sym^k \mathcal B_2$ has sections
$$
H^0(\mathbf G_1\times \mathbf G_2, \mathcal W_0)=H^0(\mathbf G_2, \Sym^k \cB_2)=\Sym^k V_m=\Sym^k H^0(\mathbb C^N \otimes \mathcal O_{\mathbb P^1}(m))=\Sym^k H^0(L^{\oplus N}).
$$ For part (2), we note $$\textnormal{Ext}^{i} \bigg(\bigwedge^{p_1} L^{[n]}\otimes \cdots \otimes \bigwedge^{p_t} L^{[n]}, \text{Sym}^{k} L^{[n]}\bigg)=H^i(\mathcal G),$$ where the sheaf $\mathcal G$ was defined in \eqref{bung}. 
Using the resolution \eqref{resolus} and Proposition \ref{resolsymp}, the only nontrivial contribution to the cohomology of $\mathcal G$ comes from the sheaf $\mathcal Y_0$ and it equals 
 \begin{align*}H^0\bigg ({\bf G}_2, \bigwedge^{p_1}\mathcal B_2^{\vee}\otimes \cdots \otimes \bigwedge^{p_t} \mathcal B_2^{\vee}\otimes \text{Sym}^k \mathcal B_2\bigg)=\text{Sym}^{k - |p|} H^0 ({\bf G}_2, \mathcal B_2)=\text{Sym}^{k- |p|} H^0(L^{\oplus N}).\end{align*} This requires that all $p_j\in \{0, 1\}$ and $|p|\leq k$. The cohomology vanishes altogether if this condition fails. The first equality will be explained after we develop more notation, see equation \eqref{boom2} in the proof of Proposition \ref{resolsymp} below.
 \vskip.1in
Finally, for Theorem \ref{tc}, the argument uses Proposition \ref{resolup}. This time around, the cohomology vanishes for all terms of the resolution \eqref{resolu} and in all degrees. \qed

\subsection {Analysis of the resolutions}\label{ss2} We now turn to Propositions \ref{resolwedgep}, \ref{resolsymp}, \ref{resolup} and deduce them from the Grassmannian vanishing results of Section \ref{s3}. We begin by making the terms of the resolutions more explicit. 

\subsubsection{Partitions and Cauchy's formula}

We use standard terminology on partitions $\lambda=(\lambda_1, \ldots, \lambda_r),$ where $\lambda_1\geq \lambda_2\geq \ldots \geq \lambda_r\geq 0.$ We set $$|\lambda|=\sum_{i=1}^{r} \lambda_i,$$ and we let $\lambda^{\dagger}$ be the transpose partition obtained by exchanging the rows and columns of the Young diagram of $\lambda$.

Throughout, we will always assume that our base scheme is defined over a field of characteristic $0$. 

For each partition $\lambda$, we let ${\mathbf S}_{\lambda}$ denote the associated Schur functor (for example, these are defined in \cite[\S 2.1]{W} where they are called $L_{\lambda^\dagger}$). For a partition $\lambda$ and any vector bundle $V\to Y$ over a base $Y$, there is an associated vector bundle ${\mathbf S}_{\lambda}(V)\to Y.$ The cases $\lambda=(1^k)$ and $\lambda=(k)$ correspond to the $k$th exterior and $k$th symmetric powers, respectively. 

The vector bundles $\bS_{\lambda}(V)\to Y$ are also defined when $\lambda$ is not a partition but rather an arbitrary dominant weight $\lambda_1\geq \cdots \geq \lambda_r$, where $r = \rank(V)$, and we now allow the entries to be negative. We have
\[
  \bS_{-\lambda}(V)=\bS_{\lambda}(V^{\vee}),
\]
where $-\lambda$ denotes the sequence $-\lambda_r \ge \cdots \ge -\lambda_1$. In addition, $$\bS_{\lambda}(V)\otimes \det V=\bS_{\lambda+(1^r)} (V).$$

If $V, W\to Y$ are two vector bundles, Cauchy's identity $$\bigwedge^\ell (V\otimes W)=\bigoplus_{|\lambda|=\ell} {\mathbf S}_{\lambda^\dagger} (V)\otimes  {\mathbf S}_{\lambda} (W)$$ holds, where the sum is over all partitions $\lambda$ of size $\ell$. We only need to consider those partitions $\lambda$ with at most $\rank(W)$ rows and at most $\rank(V)$ columns since the term is 0 otherwise. Applying this formula to the bundle $\mathcal E$ whose section cuts out the Quot scheme, we obtain
\begin{subequations}
  \begin{equation}\label{cauchy}
  \bigwedge^\ell \mathcal E^{\vee}=\bigoplus_{|\lambda|=\ell} {\mathbf S}_{\lambda^\dagger} \left(\mathcal A_1\right)\boxtimes {\mathbf S}_{\lambda} \left(\mathcal B_2^{\vee}\oplus \mathcal B_2^{\vee}\right).
\end{equation}
Here, $\lambda$ is a partition with at most $2n$ rows and the number of columns at most equal to
\begin{equation}\label{size}
  \rank(\cA_1) = \dim V_{m-1}-n=\dim V_m-N-n\leq \dim V_m - n - 1.
\end{equation}
\end{subequations}
In the discussion below, the abbreviation $d=\dim V_m$ will often be used. 

\medskip

Using \eqref{cauchy}, we immediately obtain the expressions
\begin{subequations}
  \begin{equation}\label{vlcauchy}
  \mathcal V_{\ell}=\bigoplus_{|\lambda|=\ell} {\mathbf S}_{\lambda^{\dagger}} \left(\mathcal A_1\right)\boxtimes \bigg({\mathbf S}_{\lambda} \left(\mathcal B_2^{\vee}\oplus \mathcal B_2^{\vee}\right)\otimes \bigwedge^k \mathcal B_2\bigg),
\end{equation}
and
\begin{equation}\label{wlcauchy}
  \mathcal W_{\ell}=\bigoplus_{|\lambda|=\ell} {\mathbf S}_{\lambda^{\dagger}} \left(\mathcal A_1\right)\boxtimes \left({\mathbf S}_{\lambda} \left(\mathcal B_2^{\vee}\oplus \mathcal B_2^{\vee}\right)\otimes \Sym^k \mathcal B_2\right) 
\end{equation} corresponding to Theorem \ref{ta}\,(1) and Theorem \ref{tb}\,(1). For the second halves of the two theorems, the expressions are slightly more complicated due to additional wedge powers: \begin{equation}\label{c9} 
\mathcal X_{\ell}=\bigoplus_{|\lambda|=\ell} {\mathbf S}_{\lambda^{\dagger}} \left(\mathcal A_1\right)\boxtimes \bigg({\mathbf S}_{\lambda} \left(\mathcal B_2^{\vee}\oplus \mathcal B_2^{\vee}\right)\otimes \bigwedge^{p_1}\mathcal B_2^{\vee}\otimes \cdots \otimes \bigwedge^{p_t} \mathcal B_2^{\vee}\otimes \bigwedge^k \mathcal B_2\bigg)\,.
\end{equation}
and \begin{equation}\label{c95} 
\mathcal Y_{\ell}=\bigoplus_{|\lambda|=\ell} {\mathbf S}_{\lambda^{\dagger}} \left(\mathcal A_1\right)\boxtimes \bigg({\mathbf S}_{\lambda} \left(\mathcal B_2^{\vee}\oplus \mathcal B_2^{\vee}\right)\otimes \bigwedge^{p_1}\mathcal B_2^{\vee}\otimes \cdots \otimes \bigwedge^{p_t} \mathcal B_2^{\vee}\otimes \text{Sym}^k \mathcal B_2\bigg)\,.
\end{equation} Finally, we have 
\begin{equation}\label{ulcauchy2}
  \mathcal U_{\ell}=\bigoplus_{|\lambda|=\ell} \bigg({\mathbf S}_{\lambda^{\dagger}} (\mathcal A_1)\otimes  \bigwedge^{p_1} \mathcal B_1^{\vee}\bigg)\boxtimes \bigg({\mathbf S}_{\lambda} (\mathcal B_2^{\vee}\oplus \mathcal B_2^{\vee})\otimes \bigwedge^{p_2} \mathcal B_2^{\vee} \otimes \cdots \otimes \bigwedge^{p_t} \mathcal B_2^{\vee}\bigg).
\end{equation}
\end{subequations}
\subsubsection{The Borel--Weil--Bott theorem} \label{bwbs}To compute the cohomology of the above bundles, we use the Borel--Weil--Bott theorem \cite {B}. For integers $0< n < d$, let $\mathbf G=\mathbf G(d, n)$ denote the Grassmannian of $n$-dimensional {\it quotients} of a $d$-dimensional vector space, and let $\mathcal A, \mathcal B\to \mathbf G$ denote the tautological subbundle and quotient. For a partition $$\mu=(\mu_1, \ldots, \mu_{d-n})$$ with $d-n$ rows, we form the string
\begin{equation}\label{string1}
  \rho+(0,\mu)=(d-1, d-2, \ldots, 1, 0)+(\underbrace{0, \ldots, 0}_n, \mu_1, \ldots, \mu_{d-n}).
\end{equation}

\begin{theorem}[Borel--Weil--Bott] \label{bwb} The bundle ${\mathbf S}_{\mu} \left(\mathcal A\right)$ has at most one non-zero cohomology group. Furthermore, if the string $\rho+(0, \mu)$ contains repetitions, then all cohomology groups of ${\mathbf S}_{\mu}\left(\mathcal A\right)$ vanish. 
\end{theorem}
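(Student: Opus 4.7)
The plan is to deduce Theorem \ref{bwb} directly from the classical Borel--Weil--Bott theorem, as in \cite{B}. First I would identify $\mathbf{G}(d,n)$ with the homogeneous space $\GL_d / P$, where $P$ is the parabolic subgroup stabilizing a fixed $(d-n)$-dimensional subspace of $\mathbb{C}^d$. Under this identification, the tautological subbundle $\mathcal{A}$ becomes the $\GL_d$-equivariant bundle associated with the defining representation of the $\GL_{d-n}$ factor of the Levi of $P$, extended trivially across the unipotent radical. For a partition $\mu$ with at most $d-n$ parts, it follows that $\mathbf{S}_\mu(\mathcal{A})$ is the homogeneous bundle attached to the irreducible $P$-representation of highest weight $\lambda = (0^n, \mu_1, \ldots, \mu_{d-n})$.

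Next I would invoke Bott's theorem in the form that describes the cohomology of a homogeneous bundle on $\GL_d / P$ via the dot action $w \cdot \lambda = w(\lambda + \rho) - \rho$ of the symmetric group $W = S_d$, with $\rho = (d-1, d-2, \ldots, 1, 0)$. The statement is: if $\lambda + \rho$ has a repeated entry, then $H^i(\mathbf{G}, \mathbf{S}_\mu(\mathcal{A})) = 0$ for every $i$; otherwise, there is a unique $w \in W$ sorting $\lambda + \rho$ into strictly decreasing order, and the cohomology is concentrated in the single degree $\ell(w)$, where it is the irreducible $\GL_d$-representation of highest weight $w \cdot \lambda$. Since $\lambda + \rho = \rho + (0, \mu)$ for the choice of $\lambda$ above, both assertions of Theorem \ref{bwb} follow at once.

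Since the underlying result is classical, the only real step is bookkeeping: verifying that applying $\mathbf{S}_\mu$ to the subbundle $\mathcal{A}$ (rather than to $\mathcal{A}^\vee$ or to the quotient $\mathcal{B}$) matches the weight whose Bott shift produces the string $\rho + (0,\mu)$ in the statement. The expected obstacle is notational rather than mathematical; in the write-up I would simply carry out this translation and cite \cite{B} for the underlying theorem.
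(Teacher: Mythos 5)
Your proposal is correct and matches the paper's approach: the paper also treats this as a direct application of the classical Borel--Weil--Bott theorem, simply citing Weyman \cite[Corollary 4.1.9]{W} for the Grassmannian formulation rather than carrying out the translation from $\GL_d/P$ as you sketch. The bookkeeping you describe (identifying $\mathbf{S}_\mu(\mathcal{A})$ with the homogeneous bundle of weight $(0^n,\mu)$ and applying the $\rho$-shift) is exactly what that corollary records, so the two routes are the same.
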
 

\begin{subequations}
 This formulation can be found in \cite [Corollary 4.1.9]{W}. We are using a few translations. First, the Weyl functors $K_\gamma$ defined there are isomorphic to the Schur functors used here, see \cite[Proposition 2.1.18(c)]{W}. Moreover, the dual of the tautological subbundle $\cA$ is the quotient bundle on the dual Grassmannian, and on the latter space, ${\bf S}_\mu (\cA)$ corresponds to $\mathcal{V}(0,\mu)$ in the notation of \cite{W}.

  We note from Theorem \ref{bwb} that ${\mathbf S}_{\mu} \left(\mathcal A\right)$ has no cohomology provided there exists $j$ such that
\begin{equation}\label{condition}
  j\leq \mu_j\leq n+j-1.
\end{equation} 

Let us record the ``dual'' rephrasing of condition \eqref{condition} which is also useful here. For a partition $\nu$ with $n$ rows, the bundle ${\mathbf S}_{\nu} \left(\mathcal B^{\vee}\right)$ over the Grassmannian $\mathbf G(d, n)$ has no cohomology provided that there exists $j$ such that 
\begin{equation} \label{conditiondual0}
j\leq \nu_j\leq d-n+j-1.
\end{equation} 
\end{subequations}

In Proposition \ref{resolup}, we also consider the bundle $$\mathbf S_{\mu}\left(\mathcal A\right)\otimes \bigwedge^{p} \mathcal B^{\vee}=\mathbf S_{-\mu}\left(\mathcal A^{\vee}\right)\otimes {\mathbf S}_{(1^{p})} \mathcal B^{\vee}$$ for $0\leq p\leq n$. Again by the Borel--Weil--Bott theorem \cite [Corollary 4.1.9]{W}, all cohomology vanishes provided that the string
\begin{subequations}
\begin{equation}\label{string2}
  (d-1, \ldots, 0) + (-\mu_{d-n}, \ldots, -\mu_{1}, \underbrace{1, \ldots, 1}_{p}, \underbrace{0, \ldots, 0}_{n-p})
\end{equation}
has repetitions. That happens when there exists $j$ such that 
\begin{equation}
\label{conditionplus}
j-1\leq \mu_j\leq n+j-1 \text { and } \mu_j\neq j+p-1. 
\end{equation}
\end{subequations}
Of course, the case $p=0$ recovers \eqref{condition}. In fact, for $p=0$, the string \eqref{string1} has repetitions if and only if the same holds for \eqref{string2}.

\subsubsection{Indices of partitions} The following definition is not standard but is crucial for our arguments. 

\begin{definition}\label{indef}
  Let $n$ be a non-negative integer. Let $\lambda\neq 0$ be a partition satisfying the following condition
\begin{itemize} 
\item [$(*)$]for all $j$, the number of boxes in the $j{\text{th}}$ column of $\lambda$ is either $<j$ or $\geq n+j$. 
\end{itemize} 
Let $i$ denote the largest index $j$ such that the $j{\text{th}}$ column has $\geq n+j$ boxes. We refer to $i$ as the {\bf $n$-index} of $\lambda$. If $\lambda$ does not satisfy $(*)$, we leave the $n$-index undefined. 
\end{definition}

\begin{figure}
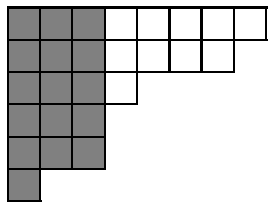

\ytableausetup{centertableaux, boxsize=1em}
\begin{ytableau}
*(gray) & *(gray) & *(gray)  &*(white)& *(white) & *(white) & *(white) & *(white)\\
*(gray)  & *(gray) & *(gray) & *(white) & *(white) & *(white) & *(white)\\
*(gray) & *(gray) & *(gray) & *(white) \\
*(gray) & *(gray) & *(gray) \\
*(gray) & *(gray) & *(gray)  \\
*(gray)
\end{ytableau}
\caption{A partition of $2$-index $i=3$}
\label{part}
\end{figure}

It may help to visualize partitions $\lambda$ of $n$-index $i$. There are $i$ ``long" columns with at least $n+i$ boxes, while the remaining columns are ``short" containing at most $i$ boxes. In Figure \ref{part}, the long columns are shown in gray, while the short columns are white. Thus, for a partition $\lambda$ of $n$-index $i$, we have
\begin{equation}\label{indin}
  \lambda_{i+1}=\cdots=\lambda_{i+n}=i.
\end{equation}

The following variation is needed for Proposition \ref{resolup} and is connected to condition \eqref{conditionplus} above. 

\begin{definition} \label{def:pnindex}
  Let $0\leq p\leq n$ be integers. Let $\lambda\neq 0,$ $\lambda\neq (1^p)$ be a partition satisfying the following condition
\begin{itemize} 
\item [$(**)$]for all $j$, the number of boxes in the $j{\text{th}}$ column of $\lambda$ is either $<j-1$ or $\geq n+j$ or equal to $j+p-1$. 
\end{itemize} 
Let $i$ denote the largest index $j$ such that the $j{\text{th}}$ column has $\geq n+j$ boxes. We refer to $i$ as the {\bf $(p, n)$-index} of $\lambda$, when defined. The case $p=0$ corresponds to the $n$-index defined above. 
\end{definition}

The partition $\lambda=(1^p)$ is not considered here. The reason is that $\lambda$ satisfies $(**)$, yet no column has $\geq n+1$ boxes, so the index is undefined. 
 
 \begin{figure}
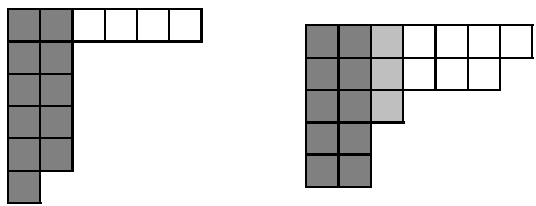

\ytableausetup{centertableaux}
\begin{ytableau}
*(gray) & *(gray)   & *(white)& *(white) & *(white) & *(white)\\
*(gray)  & *(gray)  \\
*(gray) & *(gray)\\
*(gray) & *(gray)  \\
*(gray) & *(gray) \\
*(gray)
\end{ytableau}
\quad\quad \quad
\begin{ytableau}
*(gray) & *(gray)  & *(lightgray) & *(white)& *(white) & *(white) & *(white)\\
*(gray)  & *(gray)  & *(lightgray) & *(white) & *(white) & *(white)\\
*(gray) & *(gray) & *(lightgray)\\
*(gray) & *(gray)  \\
*(gray) & *(gray) 
\end{ytableau}
\caption{Partitions of $(1, 3)$-index $i=2$}
\label{part2}
\end{figure}
 
 For a partition $\lambda$ of $(p, n)$-index $i$, two shapes are possible:
\begin{itemize}
\item [(a)] the partition $\lambda$ has $i$ ``long'' columns with $\geq n+i$ boxes, and the remaining columns are ``short'' having $\leq i-1$ boxes. 
\item [(b)] the partition $\lambda$ has $i$ ``long'' columns with $\geq n+i$ boxes, the $(i+1){\text{st}}$ column has $p+i$ boxes, and the remaining columns are ``short'' having $\leq i$ boxes. 
\end{itemize}
The partitions in Figure \ref{part2} satisfy (a) and (b) respectively. For case (b), the long and short columns are shown in dark gray and white, while the middle $(i+1){\text{st}}$ column is lighter gray. In both cases
\begin{equation}
  \label{ineqb}
  i\leq \lambda_{i+1}\leq i+1, \,\,\ldots,\,\, i\leq \lambda_{i+n}\leq i+1.
\end{equation}

\begin{proof}[Proof of Proposition \ref{resolwedgep}] For simplicity, we consider the case of the resolution $\mathcal V_{\bullet}$ first. Recall from \eqref{vlcauchy} that $$\mathcal V_{\ell}=\bigoplus_{|\lambda|=\ell} {\mathbf S}_{\lambda^{\dagger}} \left(\mathcal A_1\right)\boxtimes \bigg({\mathbf S}_{\lambda} \left(\mathcal B_2^{\vee}\oplus \mathcal B_2^{\vee}\right)\otimes \bigwedge^k \mathcal B_2\bigg).$$

For $\ell=0$, we must have $\lambda=0$, and $\mathcal V_0=\text{pr}^{\star} \bigwedge^k \mathcal B_2$ has no higher cohomology. 

When $\ell\geq 1$, we have $\lambda\neq 0$. For a partition $\lambda\neq 0$ appearing in the above sum, we distinguish two mutually exclusive situations:
\begin{itemize}
\item [$(\dagger)$] there exists $j$ such that $j\leq \lambda^{\dagger}_j\leq n+j-1$, or 
\item [$(*)$] for all $j$, the number of boxes in the $j{\text{th}}$ column of $\lambda$ is either $<j$ or $\geq n+j$. 
\end{itemize}
Of course, condition $(*)$ already appeared in Definition \ref{indef}. 
In case $(\dagger)$, we noted in \eqref {condition} that ${\mathbf S}_{\lambda^{\dagger}}\left(\mathcal A_1\right)$ has no cohomology. In case $(*)$, Proposition \ref{vanishwedge} in Section \ref{s3} below shows that $${\mathbf S}_{\lambda} \left(\mathcal B_2^{\vee}\oplus \mathcal B_2^{\vee}\right)\otimes \bigwedge^k \mathcal B_2$$ has no cohomology either. Consequently, $\mathcal V_{\ell}$ has no cohomology when $\ell\geq 1$, establishing the first half of Proposition \ref{resolwedgep}. 

For the second half, recall from \eqref{c9} that \[\mathcal X_{\ell}=\bigoplus_{|\lambda|=\ell} {\mathbf S}_{\lambda^{\dagger}} \left(\mathcal A_1\right)\boxtimes \bigg({\mathbf S}_{\lambda} \left(\mathcal B_2^{\vee}\oplus \mathcal B_2^{\vee}\right)\otimes \bigwedge^{p_1}\mathcal B_2^{\vee}\otimes \cdots \otimes \bigwedge^{p_t} \mathcal B_2^{\vee}\otimes \bigwedge^k \mathcal B_2\bigg)\,.
  \] When $\ell\neq 0$, then either $\lambda\neq 0$ satisfies condition $\left(\dagger\right)$ which guarantees cohomology vanishing for the Schur bundle on the first factor, or else $\lambda\neq 0$ satisfies $(*)$. The latter situation yields vanishing on the second factor by Proposition \ref{vanishdual}\,(1) below. The hypothesis of the Proposition is verified since $\lambda$ was seen in \eqref{size} to have at most $$\dim V_m-N-n\leq \dim V_m- (t+1)-n$$ columns and $t\leq N-1$.

In the proof of Theorem \ref{ta} in Section \ref{rmn}, we also claimed that the cohomology of $\mathcal X_0$ is given by \begin{align}\label{boomboom} H^0\bigg ({\bf G}_2, \bigwedge^{p_1}\mathcal B_2^{\vee}\otimes \cdots \otimes \bigwedge^{p_t} \mathcal B_2^{\vee}\otimes \bigwedge^k \mathcal B_2\bigg)=H^0 \bigg({\bf G}_2, \bigwedge^{k- |p|} \mathcal B_2\bigg)=\bigwedge^{k- |p|} H^0 ({\bf G}_2, \mathcal B_2).\end{align} Here we assume $|p|=p_1+\cdots+p_t\leq k\leq n$, while the answer is understood to be $0$ if this assumption fails. We can now justify the first equality using Pieri's rule combined with Borel--Weil--Bott. This is certainly well-known, but it appears easier to write the argument than to find a reference. 

Consider an arbitrary Grassmannian ${\bf G}(d, n)$ with tautological quotient $\mathcal B$, and assume $t\leq d-n$. The latter is true in our setting since $t\leq N-1\leq \dim V_m-n$. We inspect the tensor product \begin{equation}\label{tp}\bigwedge^{p_1}\mathcal B^{\vee}\otimes \cdots \otimes \bigwedge^{p_t} \mathcal B^{\vee}\otimes \bigwedge^k \mathcal B=\bigwedge^{p_1}\mathcal B^{\vee}\otimes \cdots \otimes \bigwedge^{p_t} \mathcal B^{\vee}\otimes \bigwedge^{n-k} \mathcal B^{\vee} \otimes \det \mathcal B.\end{equation} By Pieri's rule, tensorization by $\bigwedge^p \mathcal B^{\vee}$ has the effect of adding $p$ boxes, no two in the same row. Thus, if ${\bf S}_{\nu}(\mathcal B^{\vee})$ is any summand of $\bigwedge^{p_1}\mathcal B^{\vee}\otimes \cdots \otimes \bigwedge^{p_t} \mathcal B^{\vee}\otimes \bigwedge^{n-k} \mathcal B^{\vee}$, then we create at most $t+1$ columns. Hence $$1\leq \nu_1\leq t+1\leq d-n+1.$$ 
On the other hand, tensorization by $\det \mathcal B$ subtracts $1$ box from all the rows. Consequently, the summands ${\bf S}_{\mu}(\mathcal B^{\vee})$ that appear in \eqref{tp} satisfy $$\mu_1=\nu_1-1.$$ In general, we have $1\leq \mu_1\leq d-n$, so Borel--Weil--Bott shows that ${\bf S}_{\mu}(\mathcal B^{\vee})$ has no cohomology, see condition \eqref{conditiondual0}. There is one exception  corresponding to $\mu_1=0$. In this case, we must have $\nu_1=1$, which forces $\nu=(1^{|p|+n-k})$ and then $-\mu=(1^{k-|p|})$. This yields the term ${\bf S}_{\mu}(\mathcal B^{\vee})={\bf S}_{-\mu} (\mathcal B)=\bigwedge^{k-|p|} \mathcal B$, and justifies \eqref{boomboom}. 
\end{proof} 
\vskip.1in
\begin{proof}[Proof of Proposition \ref{resolsymp}] We consider the simpler case of the resolution $\mathcal W_{\bullet}$ first. By \eqref{wlcauchy} we have $$\mathcal W_{\ell}=\bigoplus_{|\lambda|=\ell} {\mathbf S}_{\lambda^{\dagger}} \left(\mathcal A_1\right)\boxtimes \left({\mathbf S}_{\lambda} \left(\mathcal B_2^{\vee}\oplus \mathcal B_2^{\vee}\right)\otimes \Sym^k \mathcal B_2\right).$$ 
The argument is similar to that of Proposition \ref{resolwedge}. This time, to deal with case $(*)$ we invoke Proposition \ref{vanishsym}. 

The analysis of the bundles $\mathcal Y_{\ell}$ for $\ell\geq 1$ in the general case uses Proposition \ref{vanishdual}\,(2) instead.  

In Section \ref{rmn}, we also needed the cohomology of the bundle $\mathcal Y_0$. To this end, we show that on an arbitrary Grassmannian ${\bf G}(d, n)$, for $t\leq d-n$ and positive integers $p_1, \ldots, p_t>0$, the bundle $\bigwedge^{p_1} \mathcal B^{\vee}\otimes \ldots \otimes \bigwedge^{p_t} \mathcal B^{\vee}\otimes \Sym^k \mathcal B$ has no cohomology unless $p_1=\cdots=p_t=1$ and $k\geq t$, in which case there is only one nontrivial cohomology group 
\begin{equation}\label{boom2}H^0\left(\bigwedge^{p_1} \mathcal B^{\vee}\otimes \cdots \otimes \bigwedge^{p_t} \mathcal B^{\vee}\otimes \Sym^k \mathcal B\right)=\text{Sym}^{k-t} H^0(\mathcal B).\end{equation} Indeed, let ${\bf S}_{\mu}(\mathcal B^{\vee})$ be a summand of $\bigwedge^{p_1} \mathcal B^{\vee}\otimes \cdots \otimes \bigwedge^{p_t} \mathcal B^{\vee}\otimes \Sym^k \mathcal B$. Dualizing, we have that ${\bf S}_{-\mu}(\mathcal B^{\vee})$ is a summand of
$$\bigwedge^{p_1} \mathcal B\otimes \cdots \otimes \bigwedge^{p_t} \mathcal B\otimes \Sym^k \mathcal B^{\vee}=\bigwedge^{n-p_1} \mathcal B^{\vee}\otimes \cdots \otimes \bigwedge^{n-p_t} \mathcal B^{\vee}\otimes \Sym^k \mathcal B^{\vee} \otimes (\det \mathcal B)^{t}.$$
Let ${\bf S}_{\nu}(\mathcal B^{\vee})$ be a summand of $\bigwedge^{n-p_1} \mathcal B^{\vee}\otimes \cdots \otimes \bigwedge^{n-p_t} \mathcal B^{\vee}\otimes \Sym^k \mathcal B^{\vee}$. Applying the Pieri rules, we start with the partition $(k)$ corresponding to $\text{Sym}^k \,\mathcal B^{\vee}$, to which we add $n-p_1, \ldots, n-p_t$ boxes respectively, such that at each stage we do not add two boxes to the same row. Therefore $$0\leq \nu_n\leq t$$ (unless $n=1$ which can be treated separately). Furthermore, if $\nu_n=t$, then $\nu_2=\cdots=\nu_{n-1}=t$. In this case we can say a bit more. Note that the last $n-1$ rows of $\nu$ each contain $t$ boxes, and $n-p_1, \ldots, n-p_t$ are all $\leq n-1$. For this to be possible, equality must hold, hence $p_1=\ldots=p_t=1$. Moreover, all boxes have to be added to the last $n-1$ rows. In particular, no boxes can be added to the first row. Hence $\nu$ is the partition $$\nu_1=k, \,\,\nu_2=\ldots=\nu_n=t.$$ For this to make sense, we must have $\nu_1=k\geq t=\nu_2$. 

Finally, each ${\bf S}_{-\mu} (\mathcal B^{\vee})={\bf S}_{\nu}(\mathcal B^{\vee})\otimes (\det \mathcal B)^t$ satisfies $\mu_{n+1-i}+\nu_i=t.$ Since $0\leq \nu_n\leq t$, we also have $0\leq \mu_1\leq t.$ Since $t\leq d-n$, it follows by condition \eqref{conditiondual0} that we only have nontrivial cohomology for ${\bf S}_{\mu} (\mathcal B^{\vee})$ when $\mu_1=0$. This case corresponds to $\nu_n=t$. We have seen above this means $\nu_1=k$ and $\nu_2=\cdots=\nu_n=t$. This yields $\mu=(0, \ldots, 0, t-k)$ and ${\bf S}_{\mu} (\mathcal B^{\vee}) = \text{Sym}^{k-t} \mathcal B$, which implies the claim. \end{proof}
\vskip.1in
\begin{proof}[Proof of Proposition \ref{resolup}.]
 We need to inspect
\[
  \mathcal U_{\ell}=\bigoplus_{|\lambda|=\ell} \bigg(\mathbf S_{\lambda^{\dagger}}\left(\mathcal A_1\right)\otimes \bigwedge^{p_1} \mathcal B_1^{\vee}\bigg)\boxtimes \bigg({\mathbf S}_{\lambda} (\mathcal B_2^{\vee}\oplus \mathcal B_2^{\vee})\otimes \bigwedge^{p_2} \mathcal B_2^{\vee} \otimes \cdots \otimes \bigwedge^{p_t} \mathcal B_2^{\vee}\bigg).
\]
The case $\ell=0$ follows from either \eqref{boomboom} or \eqref{boom2} with $k=0$. 

Let $\ell\neq 0$. When $\lambda^{\dagger}$ satisfies \eqref{conditionplus} (for $p=p_1$), the first factor $\mathbf S_{\lambda^{\dagger}}\left(\mathcal A_1\right)\otimes \bigwedge^{p_1} \mathcal B_1^{\vee}$ has no cohomology. Otherwise, $\lambda$ satisfies condition $(**)$ from Definition~\ref{def:pnindex}. In this case, we claim the second factor has no cohomology. 

Indeed, when $\lambda=(1^{p_1})$, we have that ${\bf  S}_{\lambda} = \bigwedge^{p_1}$ is an exterior power and $$\bigwedge^{p_1}\left(\mathcal{B}^\vee_2 \oplus \mathcal{B}^\vee_2\right) \cong \bigoplus_{j=0}^{p_1} \bigwedge^j \mathcal{B}^\vee_2 \otimes \bigwedge^{p_1-j} \mathcal{B}^\vee_2.$$ As in the above analysis of  \eqref{boomboom}, every summand $\bS_\mu(\mathcal{B}_2^\vee)$ which appears in the second factor then satisfies $1 \le \mu_1 \le t+1$ by the Pieri rule. Since $t+1\leq N \le \dim V_m-n= d-n$, we get vanishing by Borel-Weil-Bott and \eqref{conditiondual0}.

When $\lambda\neq (1^{p_1})$, letting $i$ denote the $(p_1, n)$-index of $\lambda$, the second factor
\[
  {\mathbf S}_{\lambda} (\mathcal B_2^{\vee}\oplus \mathcal B_2^{\vee})\otimes \bigwedge^{p_2} \mathcal B_2^{\vee} \otimes \cdots \otimes \bigwedge^{p_t} \mathcal B_2^{\vee}
\]
has no cohomology by Proposition~\ref{vanishdual}\,(3).
\end{proof}

\section{Cohomology vanishing on the Grassmannian}\label{s3}

\subsection{Overview} \label{ov} We establish the vanishing results which played a crucial role in the proofs of Propositions \ref{resolwedgep}, \ref{resolsymp} and \ref{resolup}. 

We continue to write $\mathbf G={\bf G}(d, n)$ for the Grassmannian of $n$-dimensional quotients of a $d$-dimensional vector space, 
and $\mathcal B\to \mathbf G$ for the tautological rank $n$ quotient. 

Recall from Section \ref{bwbs} that ${\mathbf S}_{\delta} \left(\mathcal B^{\vee}\right)$ has no cohomology provided that there exists $j$ such that 
\begin{equation}
\label{conditiondual}
j\leq \delta_j\leq d-n+j-1.
\end{equation} 

We will establish three results. The first corresponds to the resolution $\mathcal V_{\bullet}$, while the second pertains to the resolution $\mathcal W_{\bullet}$. Together, these  already capture the main ideas. The third result covers the resolutions $\mathcal X_{\bullet}, \mathcal Y_{\bullet}$ and $\mathcal U_{\bullet}$. Although the notation in this case is more involved, the argument does not require new ideas. We present these results separately for clarity. 

\begin{proposition}\label{vanishwedge}
  Let $\lambda\neq 0$ be a partition that fits in the $(2n) \times (d-n-1)$ rectangle and assume that $\lambda$ has $n$-index $i$. For every summand $\bS_\delta(\mathcal B^{\vee}) \subset \bS_\lambda(\mathcal B^{\vee} \oplus \mathcal B^{\vee}) \otimes \bigwedge^\bullet(\mathcal B)$, the partition $\delta$ satisfies condition \eqref{conditiondual} with $j=i$.

    In particular, all cohomology of $\bS_\lambda(\mathcal B^{\vee} \oplus \mathcal B^{\vee}) \otimes \bigwedge^\bullet(\mathcal B)$ vanishes.
        
  \end{proposition}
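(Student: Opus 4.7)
My plan is to decompose $\bS_\lambda(\mathcal B^\vee \oplus \mathcal B^\vee) \otimes \bigwedge^\bullet \mathcal B$ into irreducible summands $\bS_\delta(\mathcal B^\vee)$ and to verify condition \eqref{conditiondual} at $j=i$ on each. Two successive applications of the Littlewood--Richardson rule give
\[
\bS_\lambda(\mathcal B^\vee \oplus \mathcal B^\vee) \;=\; \bigoplus_{\pi} \Big(\textstyle\sum_{\mu,\nu} c^\lambda_{\mu\nu}\,c^\pi_{\mu\nu}\Big)\,\bS_\pi \mathcal B^\vee,
\]
with $\pi$ a partition of $|\lambda|$ having at most $n$ parts. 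The identification $\bigwedge^k \mathcal B \cong \bigwedge^{n-k}\mathcal B^\vee \otimes \det\mathcal B$ together with Pieri's rule decomposes each $\bS_\pi \mathcal B^\vee \otimes \bigwedge^k\mathcal B$ further as $\bigoplus_{\pi'}\bS_{\pi'-(1^n)}(\mathcal B^\vee)$, where $\pi'/\pi$ ranges over vertical strips of size $n-k$ and $\pi'$ has at most $n$ parts. Every summand is therefore $\bS_\delta(\mathcal B^\vee)$ with $\delta = \pi' - (1^n)$.

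The target condition $i \leq \delta_i \leq d-n+i-1$ is equivalent to $i+1 \leq \pi'_i \leq d-n+i$. For the lower bound $\pi'_i \geq i+1$, I would use that by \eqref{indin} rows $i+1,\ldots,i+n$ of $\lambda$ all have length exactly $i$; since $\mu$ has at most $n$ parts, the last $i$ of these rows lie entirely inside the skew shape $\lambda/\mu$ and form an $i\times i$ block. Column-strictness of the LR filling on this rigid block, combined with $\pi \supseteq \mu$, forces $\pi_i \geq i+1$ and hence $\pi'_i \geq \pi_i \geq i+1$. For the upper bound $\pi'_i \leq d-n+i$, I would sharpen the standard inequality $\pi_i \leq \mu_i + \nu_1$ using the $n$-index hypothesis: the short columns of $\lambda$ (indices $>i$, heights $\leq i$) and the lattice-word condition on the LR tableau should combine to yield $\mu_i + \nu_1 \leq \lambda_1 + i$, and with $\lambda_1 \leq d-n-1$ this gives $\pi_i \leq d-n+i-1$ and hence $\pi'_i \leq \pi_i+1 \leq d-n+i$.

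Once both bounds on $\pi'_i$ are in place, condition \eqref{conditiondual} at $j=i$ gives Borel--Weil--Bott acyclicity of every $\bS_\delta(\mathcal B^\vee)$, and the ``in particular'' clause follows by additivity of cohomology over the decomposition. The main obstacle is the sharpening of the upper-bound combinatorial inequality: the naive estimate $\mu_i + \nu_1 \leq \lambda_i + \lambda_1$ can be of order $2(d-n-1)$, far too weak, and the improvement to $\lambda_1 + i$ must use the $n$-index structure essentially---trading the size of $\mu_i$ against that of $\nu_1$ via a column-by-column analysis of the LR tableau on $\lambda/\mu$ and the distribution of the content $\nu$ across the long columns $1,\ldots,i$ and the short columns of index $>i$.
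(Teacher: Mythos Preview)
Your overall framework---decompose $\bS_\lambda(\mathcal B^\vee\oplus\mathcal B^\vee)$ via Littlewood--Richardson, rewrite $\bigwedge^k\mathcal B$ as $\det\mathcal B\otimes\bigwedge^{n-k}\mathcal B^\vee$, apply Pieri, and then verify \eqref{conditiondual} at $j=i$---is exactly the paper's strategy. However, there is a genuine gap in your lower bound, and you have misjudged which inequality is the hard one.

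\textbf{The lower bound.} Your argument for $\pi_i\geq i+1$ does not go through as written. The $i\times i$ block in rows $n+1,\dots,n+i$ of $\lambda/\mu$, together with column-strictness and the fact that $\nu$ has at most $n$ parts, forces each of the first $i$ columns of $\lambda/\mu$ to contain at most $n$ boxes; since those columns have length $\lambda^\dagger_c-\mu^\dagger_c\geq (n+i)-\mu^\dagger_c$, this yields $\mu^\dagger_c\geq i$ for $c\leq i$, i.e.\ $\mu_i\geq i$. Combined with $\pi\supseteq\mu$ this gives only $\pi_i\geq i$, not $\pi_i\geq i+1$. The missing $+1$ is exactly the content of the delicate half of the paper's Lemma~\ref{lem:gammai-bounds}: when $\mu_i=i$ one must analyse the Littlewood--Richardson tableau of shape $\pi/\mu$ (not $\lambda/\mu$) and argue by contradiction, using Proposition~\ref{prop:LR}(4) and a count of where the labels in the interval $[i,\,i+n-\mu^\dagger_i]$ can be placed, that $\pi_i=i$ is impossible. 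Nothing in the rigid $i\times i$ block alone supplies this.

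\textbf{The upper bound.} You flag this as the main obstacle, but in fact the paper dispatches it in two lines using only dominance. From $c^\lambda_{\mu\nu}\neq 0$ one has $\lambda\geq\mu\cup\nu$ (Proposition~\ref{prop:LR}(5)), hence
\[
\sum_{j=1}^i(\mu_j+\nu_j)\;\leq\;\sum_{j=1}^{2i}(\mu\cup\nu)_j\;\leq\;\sum_{j=1}^{2i}\lambda_j\;\leq\;i(d-n-1)+i\cdot i,
\]
the last step using $\lambda_j\leq d-n-1$ for $j\leq i$ and $\lambda_j\leq i$ for $i<j\leq 2i$. Then $c^\pi_{\mu\nu}\neq 0$ gives $\mu+\nu\geq\pi$ (Proposition~\ref{prop:LR}(4)), so $i\pi_i\leq\sum_{j\leq i}\pi_j\leq\sum_{j\leq i}(\mu_j+\nu_j)\leq i(d-n+i-1)$. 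No Weyl inequality or column-by-column trade-off is needed. Your proposed route via $\pi_i\leq\mu_i+\nu_1$ and $\mu_i+\nu_1\leq\lambda_1+i$ can be made to work, but it is more circuitous and the first inequality already requires the Horn/Weyl machinery.
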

  
    \begin{proposition}   \label{vanishsym}  
  Let $\lambda\neq 0$ be a partition that fits in the $(2n) \times (d-n-1)$ rectangle and assume that $\lambda$ has $n$-index $i$.
  \begin{enumerate}[\rm (1)]
  \item If $i < n$, then for every summand $\bS_\delta(\mathcal B^{\vee}) \subset \bS_\lambda(\mathcal B^{\vee} \oplus \mathcal B^{\vee}) \otimes \Sym^\bullet(\mathcal B)$, the partition $\delta$ satisfies condition \eqref{conditiondual} with $j=i$.

    In particular, all cohomology of $\bS_\lambda(\mathcal B^{\vee} \oplus \mathcal B^{\vee}) \otimes \Sym^\bullet(\mathcal B)$ vanishes.    

  \item If $i = n$ and $k \le n$, then for every summand $\bS_\delta(\mathcal B^{\vee}) \subset \bS_\lambda(\mathcal B^{\vee} \oplus \mathcal B^{\vee}) \otimes \Sym^k(\mathcal B)$, the partition $\delta$ satisfies condition \eqref{conditiondual} with $j=i$.

    In particular, all cohomology of $\bS_\lambda(\mathcal B^{\vee} \oplus \mathcal B^{\vee}) \otimes \Sym^k(\mathcal B)$ vanishes.        
  \end{enumerate}
\end{proposition}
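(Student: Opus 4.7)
The plan is to decompose
\[
  \bS_\lambda(\mathcal{B}^\vee\oplus \mathcal{B}^\vee)\otimes \Sym^k(\mathcal{B}) \;=\; \bigoplus_\delta m_\delta\, \bS_\delta(\mathcal{B}^\vee)
\]
into irreducibles on $\mathbf G$ and verify directly that every summand $\bS_\delta(\mathcal{B}^\vee)$ satisfies the Borel--Weil--Bott vanishing condition \eqref{conditiondual} with $j=i$. I first branch $\bS_\lambda(\mathcal{B}^\vee\oplus \mathcal{B}^\vee)=\bigoplus_\tau N^\lambda_\tau\, \bS_\tau(\mathcal{B}^\vee)$ with $\ell(\tau)\leq n$ and $|\tau|=|\lambda|$, and then apply Pieri's rule to tensor each $\bS_\tau(\mathcal{B}^\vee)$ with $\Sym^k\mathcal{B}$.

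The central input is the dominance bound $\tau\trianglelefteq \tau^{\mathrm{high}}$ for every $\tau$ appearing, where
\[
  \tau^{\mathrm{high}}=\bigl(\lambda_1+\lambda_{n+1},\;\lambda_2+\lambda_{n+2},\;\ldots,\;\lambda_n+\lambda_{2n}\bigr).
\]
This holds because $y^{\tau^{\mathrm{high}}}$ is the lex-leading monomial of $s_\lambda(y_1,y_1,\ldots,y_n,y_n)$, realized by the semistandard tableau of shape $\lambda$ with row $j$ filled with the entry $j$; the Schur-to-monomial transition matrix is unitriangular for dominance, forcing $\tau\trianglelefteq \tau^{\mathrm{high}}$. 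Equivalently, $\sum_{j\leq r}\tau_j\leq \sum_{j\leq r}\tau^{\mathrm{high}}_j$ for all $r$, with equality at $r=n$; in particular $\tau_1\leq \tau^{\mathrm{high}}_1$ and $\tau_n\geq \tau^{\mathrm{high}}_n$.

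Pieri's rule applied to $\bS_\tau(\mathcal{B}^\vee)\otimes \Sym^k\mathcal{B}=\bS_{-\tau}(\mathcal{B})\otimes \bS_{(k)}(\mathcal{B})$ shows that every summand $\bS_\delta(\mathcal{B}^\vee)$ satisfies $\tau_{i+1}\leq \delta_i\leq \tau_i$ for $i=1,\ldots,n-1$ together with $\delta_n\geq \tau_n-k$. Now since $\lambda$ has $n$-index $i$, \eqref{indin} yields $\lambda_{n+1}=i$, and $\lambda_n=i$ when $i<n$, while $\lambda_n\geq \lambda_{n+1}=n$ and $\lambda_{2n}=n$ when $i=n$. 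Combined with $\lambda_1\leq d-n-1$ (from $\lambda$ fitting in $(2n)\times(d-n-1)$), the upper bound on $\delta_i$ follows at once:
\[
  \delta_i\leq \tau_i\leq \tau_1\leq \tau^{\mathrm{high}}_1=\lambda_1+\lambda_{n+1}=\lambda_1+i\leq d-n+i-1.
\]
For the lower bound, case (1) uses $\delta_i\geq \tau_{i+1}\geq \tau_n\geq \tau^{\mathrm{high}}_n=\lambda_n+\lambda_{2n}\geq i+0=i$, while case (2) uses $\delta_n\geq \tau_n-k\geq \tau^{\mathrm{high}}_n-k\geq 2n-k\geq n$, where the last inequality is exactly the hypothesis $k\leq n$.

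The main obstacle I anticipate is justifying the dominance inequality $\tau\trianglelefteq \tau^{\mathrm{high}}$ cleanly: although it is a standard consequence of the unitriangularity of Schur expansions, it requires identifying the leading contribution of the doubled Schur polynomial and checking that no cancellation displaces it. The sharpness of the bound $k\leq n$ in case (2) is already visible in the extremal rectangular example $\lambda=(n^{2n})$, for which $\tau=(2n)^n$ is the unique summand and $\delta_n=2n-k$; the desired bound $\delta_n\geq n$ fails precisely when $k>n$.
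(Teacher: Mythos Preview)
Your central dominance claim is incorrect, and this breaks both the upper and lower bound chains.

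\textbf{The error.} You assert that every $\tau$ with $\bS_\tau(\mathcal B^\vee)\subset \bS_\lambda(\mathcal B^\vee\oplus\mathcal B^\vee)$ satisfies $\tau\trianglelefteq\tau^{\mathrm{high}}$ with $\tau^{\mathrm{high}}_j=\lambda_j+\lambda_{n+j}$, and you justify this by saying $y^{\tau^{\mathrm{high}}}$ is the lex-leading monomial of $s_\lambda$ in the doubled variables. But the Schur polynomial is symmetric, so its lex-leading monomial is intrinsic, and it equals $y^\sigma$ with $\sigma_j=\lambda_{2j-1}+\lambda_{2j}$, not your $\tau^{\mathrm{high}}$. (Your tableau ``row $j$ filled with $j$'' computes the weight with respect to the ordering $y_1<\cdots<y_n<y_1<\cdots<y_n$, which is not the lex order on $y_1,\dots,y_n$.) In fact $\tau^{\mathrm{high}}\trianglelefteq\sigma$, the reverse of what you need.

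\textbf{Counterexamples.} Take $n=2$, $\lambda=(3,3,2,2)$ (which has $n$-index $i=2$). Then $\tau=(6,4)$ appears (via $\alpha=\beta=(3,2)$), yet your $\tau^{\mathrm{high}}=(5,5)$, so $\tau_1=6>5=\lambda_1+i$ and your upper-bound chain $\delta_i\le\tau_i\le\tau_1\le\lambda_1+i$ fails. For the lower bound in case~(1), take $n=3$, $\lambda=(5,1,1,1)$ (with $n$-index $i=1$). Then $\tau=(6,2,0)$ appears (via $\alpha=(5,1)$, $\beta=(1,1)$), so $\tau_n=0<1=i$ and your chain $\delta_i\ge\tau_{i+1}\ge\tau_n\ge i$ fails.

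\textbf{How to fix it.} The correct dominance $\tau\trianglelefteq\sigma$ does give what you need, but only after averaging rather than passing to $\tau_1$ or $\tau_n$. From $\sum_{j\le i}\tau_j\le\sum_{j\le i}\sigma_j=\sum_{m\le 2i}\lambda_m\le i(d-n-1)+i\cdot i$ you get $\tau_i\le d-n+i-1$; from $\sum_{j>i}\tau_j\ge\sum_{m>2i}\lambda_m\ge(n-i)i$ (using $\lambda_{2i+1}=\cdots=\lambda_{i+n}=i$) you get $\tau_{i+1}\ge i$ when $i<n$; and when $i=n$ you get $\tau_n\ge\sigma_n=\lambda_{2n-1}+\lambda_{2n}=2n$. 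This repaired argument is essentially the paper's: the paper proves $\gamma_1+\cdots+\gamma_i\le\lambda_1+\cdots+\lambda_{2i}$ via Littlewood--Richardson (Lemmas~\ref{lem:alpha+beta} and~\ref{lem:gammai-bounds}) and establishes $\gamma_{i+1}\ge i$ (resp.\ $\gamma_n\ge 2n$) in Lemma~\ref{lem:sym-rect}, then combines with Pieri interlacing exactly as you do.
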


\begin{proposition}\label{vanishdual}
  Let $\lambda\neq 0$ be a partition that fits in the $(2n) \times (d-n-t-1)$ rectangle, for some $t\geq 0$. 
\begin{enumerate}
[\rm (1)]  
\item Assume that $\lambda$ has $n$-index $i$. Let $p_1, \ldots, p_t$ be nonnegative integers and let $0\leq k\leq n$. Then every summand $$\bS_{\delta}(\mathcal B^{\vee})\subset {\mathbf S}_{\lambda} (\mathcal B^{\vee}\oplus \mathcal B^{\vee})\otimes \bigwedge^{p_1} \mathcal B^{\vee} \otimes \cdots \otimes \bigwedge^{p_t} \mathcal B^{\vee}\otimes \bigwedge^k \mathcal B$$ satisfies condition \eqref{conditiondual} with $j=i$.
  
In particular, all cohomology of ${\mathbf S}_{\lambda} (\mathcal B^{\vee}\oplus \mathcal B^{\vee})\otimes \bigwedge^{p_1} \mathcal B^{\vee} \otimes \cdots \otimes \bigwedge^{p_t} \mathcal B^{\vee}\otimes \bigwedge^k \mathcal B$ vanishes. 

\item Assume that $\lambda$ has $n$-index $i$. Let $p_1, \ldots, p_t$ be nonnegative integers and let $0\leq k\leq n$. All summands of
  $${\bf S}_{\lambda}(\mathcal B^{\vee}\oplus \mathcal B^{\vee})\otimes \bigwedge^{p_1}\mathcal B^{\vee}\otimes \cdots \otimes \bigwedge^{p_t} \mathcal B^{\vee}\otimes \textnormal{Sym}^k (\mathcal B)$$
  satisfy condition \eqref{conditiondual} for $j=i$, and therefore this bundle has no cohomology. 

\item Assume $\lambda\neq (1^p)$ has $(p, n)$-index $i$, for some $0\leq p\leq n$. Let $p_1, \ldots, p_{t-1}$ be nonnegative integers. Then every summand
  \[
    \bS_{\delta}(\mathcal B^{\vee})\subset {\mathbf S}_{\lambda} (\mathcal B^{\vee}\oplus \mathcal B^{\vee})\otimes \bigwedge^{p_1} \mathcal B^{\vee} \otimes \cdots \otimes \bigwedge^{p_{t-1}} \mathcal B^{\vee}
  \]
  satisfies condition \eqref{conditiondual} with $j=i$. 

  In particular, all cohomology of ${\mathbf S}_{\lambda} (\mathcal B^{\vee}\oplus \mathcal B^{\vee})\otimes \bigwedge^{p_1} \mathcal B^{\vee} \otimes \cdots \otimes \bigwedge^{p_{t-1}} \mathcal B^{\vee}$ vanishes. 
  \end{enumerate}
\end{proposition}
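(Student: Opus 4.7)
The plan is to follow the Cauchy--Pieri strategy already deployed for Propositions~\ref{vanishwedge} and~\ref{vanishsym}. By Cauchy's identity,
\[
  \bS_{\lambda}(\cB^{\vee}\oplus \cB^{\vee}) \;=\; \bigoplus_{\mu,\nu} c^{\lambda}_{\mu\nu}\, \bS_{\mu}(\cB^{\vee})\otimes \bS_{\nu}(\cB^{\vee}),
\]
with $\mu,\nu$ ranging over partitions with at most $n$ rows. Iteratively tensoring with the $\bigwedge^{k_{j}}\cB^{\vee}$'s and applying the Pieri rule, every nonzero summand $\bS_{\delta}(\cB^{\vee})$ in the full tensor product is produced by choosing $(\mu,\nu)$ with $c^{\lambda}_{\mu\nu}\neq 0$, forming an intermediate partition $\pi$ with $c^{\pi}_{\mu\nu}\neq 0$ and at most $n$ rows, and then successively adding $r$ (in Part~(1)) or $r-1$ (in Part~(2)) vertical strips of sizes $k_{1},k_{2},\ldots$.

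For the lower bound $\delta_{i}\geq i$ (common to both parts), the nonvanishing $c^{\lambda}_{\mu\nu}\neq 0$ produces a Littlewood--Richardson skew tableau of shape $\lambda/\mu$ with content $\nu$. Since $\nu$ has at most $n$ rows, its entries lie in $\{1,\ldots,n\}$, so every column of $\lambda/\mu$ has at most $n$ entries, giving $\mu^{\dagger}_{j}\geq \lambda^{\dagger}_{j}-n$ for all $j$. For $j=i$ the $i$-th column of $\lambda$ is long of length $\geq n+i$ (by either the $n$-index hypothesis in Part~(1) or the $(k,n)$-index hypothesis in Part~(2)), hence $\mu^{\dagger}_{i}\geq i$, i.e.\ $\mu_{i}\geq i$. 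The coordinate-wise inclusions $\mu \subseteq \pi \subseteq \delta$ (LR only grows partitions and Pieri only adds boxes) then yield $\delta_{i}\geq \mu_{i}\geq i$.

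The upper bound $\delta_{i}\leq d-n+i-1$ is the crux and the main obstacle. The naive Weyl bound $\pi_{i}\leq \mu_{1}+\nu_{i}$ is too weak on its own, since $\mu_{1},\nu_{i}\leq \lambda_{1}\leq d-n-r-1$ would permit $\delta_{i}$ to grow like $\sim 2(d-n-r-1)+r$. The strategy is to combine three constraints: (a)~the rectangle hypothesis $\lambda_{1}\leq d-n-r-1$; (b)~the defining identities $\lambda_{i+1}=\cdots=\lambda_{i+n}=i$ for the $n$-index (or their $(k,n)$-analogues from condition $(**)$, which in Case~(a) sharpen to $\lambda_{i}=i$ and in Case~(b) introduce a controlled column of length $i+k$); and (c)~the constraint that $\pi$ and $\delta$ have at most $n$ rows. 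A careful LR bookkeeping driven by~(b)--(c) sharpens Weyl's bound to
\[
\pi_{i} \;\leq\; \lambda_{i} + i,
\]
after which $r$ Pieri strip additions (each adding at most one box to row $i$) together with~(a) give
\[
\delta_{i} \;\leq\; \pi_{i}+r \;\leq\; \lambda_{i}+i+r \;\leq\; (d-n-r-1)+i+r \;=\; d-n+i-1.
\]
In Part~(2) Case~(a), $\lambda_{i}=i$ forces $\mu_{i}=\nu_{i}=i$, and the plain Weyl bound $\pi_{i}\leq \mu_{1}+\nu_{i}\leq \lambda_{1}+i$ already suffices; Case~(b) is handled analogously with a small extra LR estimate for the distinguished column of length $i+k$. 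The hypothesis $\lambda\neq (1^{k})$ ensures the $(k,n)$-index $i\geq 1$ is defined. The principal technical difficulty throughout is the sharpening of Weyl's inequality to $\pi_{i}\leq \lambda_{i}+i$ using the specific short-column structure of $\lambda$; this is the true combinatorial content of the proposition.
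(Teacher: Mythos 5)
Your lower bound $\delta_i \ge i$ is fine and is in substance the same argument as Lemma~\ref{lem:alphai-ge-i} (yours is phrased via $\mu^\dagger_i \ge \lambda^\dagger_i - n$, the paper's is the contrapositive). The Pieri bookkeeping $\gamma_j \le \delta_j \le \gamma_j + r$ (resp.\ $+ (r-1)$) is also correct. The gap is in the upper bound, and it is genuine: the inequality you assert as the crux, $\pi_i \le \lambda_i + i$, is \emph{false}. Take $n=2$, $\lambda = (4,2,2,2)$ (so $\lambda^\dagger=(4,4,1,1)$, $n$-index $i=2$, $\lambda_i=2$, and $\lambda$ fits in $4\times(d-n-r-1)$ for $d\ge 7+r$). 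With $\alpha=\beta=(3,2)$ one checks directly that $c^{\lambda}_{\alpha,\beta}\ne 0$ (a valid LR filling of $\lambda/\alpha$ with content $(3,2)$ exists) and $c^{(5,5)}_{\alpha,\beta}=1$ (complementarity in the $2\times5$ rectangle), so $\bS_{(5,5)}(\cB^\vee)$ is a summand of $\bS_\lambda(\cB^\vee\oplus\cB^\vee)$ with $\gamma_2 = 5 > 4 = \lambda_i + i$. So the ``careful LR bookkeeping'' you invoke cannot deliver this bound, and without the upper bound the proof does not close.

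What the paper actually does is weaker per-row but correct: it never tries to bound $\gamma_i$ directly against $\lambda_i$. Instead it uses the averaging trick $i\gamma_i \le \gamma_1+\cdots+\gamma_i$ together with the two dominance facts from Proposition~\ref{prop:LR}(4),(5), namely $\gamma_1+\cdots+\gamma_i \le (\alpha_1+\beta_1)+\cdots+(\alpha_i+\beta_i) \le \lambda_1+\cdots+\lambda_{2i}$ (these are exactly inequalities \eqref{ineq} and \eqref{ineq2}, established for Lemmas~\ref{lem:alpha+beta} and~\ref{lem:gammai-bounds}). Bounding $\lambda_1,\dots,\lambda_i \le d-n-r-1$ (the rectangle) and $\lambda_{i+1},\dots,\lambda_{2i}\le i$ (resp.\ $\le i+1$ in the $(k,n)$-index case, via \eqref{ineqb}) gives $\gamma_i \le d-n-r-1+i$ (resp.\ $d-n-r+i$), and then the Pieri step of at most $r$ (resp.\ $r-1$) boxes per row yields $\delta_i \le d-n+i-1$. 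In my counterexample this gives the correct $\gamma_2\le 5$, matching the actual maximum. You should replace the claimed per-row sharpening with this averaging argument; the rest of your outline then works.
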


\subsection{Littlewood--Richardson coefficients} \label{ss:L-R}

For the proofs of the above propositions, we need a few preliminaries about the Littlewood--Richardson coefficients. The material below is well-known, but to establish the notation, we recall several definitions and basic facts, some of which can be found in \cite[\S\S 2, 3]{SS}. 

For two partitions $\alpha$ and $\beta$ of the same size $|\alpha|=|\beta|$, write $\alpha \ge \beta$ ($\alpha$ {\bf dominates} $\beta$) if, for all $m$, we have $$\alpha_1+\cdots+\alpha_m \ge \beta_1+\cdots+\beta_m.$$

Note that $\alpha \ge \beta$ if and only if $\alpha^\dagger \le \beta^\dagger$.

Given partitions $\alpha, \beta, \gamma$, let $c_{\alpha, \beta}^\gamma$ denote the Littlewood--Richardson coefficient, which is the multiplicity of the Schur functor $\bS_\gamma$ in the tensor product $\bS_\alpha \otimes \bS_\beta$. 

The coefficient $c_{\alpha, \beta}^{\gamma}$ counts the number of Littlewood--Richardson tableaux. These are fillings of the skew tableau of shape $\gamma/\alpha$ with content $\beta$ (i.e., for all $i$, the label $i$ appears exactly $\beta_i$ times) such that the following two properties hold:
\begin{itemize}
\item ({\it Semistandard}) In each row, the entries are weakly increasing from left to right, and in each column, the entries are strictly increasing from top to bottom.
\item ({\it Lattice word property}) Let $w$ be the word (called reading word) obtained by reading the entries in each row from right to left, starting with the top row and going down. For each $i$ and $m$, let $w_i(m)$ be the number of times that $i$ appears in the first $m$ entries of $w$. Then for all $m$ and $i$, we have $w_i(m) \ge w_{i+1}(m)$.
\end{itemize}
  
We collect a few facts about these coefficients in the next result.

\begin{proposition} \label{prop:LR}
\begin{enumerate}[\rm (1)]
\item For any complex vector bundles $V,$ $W,$ we have
  \[
    \bS_\gamma(V \oplus W) \cong \bigoplus_{\alpha, \beta} (\bS_\alpha (V) \otimes \bS_\beta(W))^{\oplus c^\gamma_{\alpha, \beta}}
  \]
  where the sum is over all partitions $\alpha, \beta$.

\item If $c^\gamma_{\alpha, \beta} \ne 0$, then $|\gamma| = |\alpha| + |\beta|$.

\item If $c^\gamma_{\alpha, \beta} \ne 0$, then $\gamma$ contains both $\alpha$ and $\beta$, i.e., $\gamma_i \ge \max(\alpha_i, \beta_i)$ for all $i$.
  
\item In a Littlewood--Richardson tableau of shape $\gamma / \alpha$ and type $\beta$, all occurrences of the number $i$ must appear in rows $i$ and later.

  As a consequence, if $c^\gamma_{\alpha, \beta} \ne 0$, then $\alpha+\beta$ dominates $\gamma$, i.e., for all $m$, we have
  \[
    \sum_{i=1}^m (\alpha_i + \beta_i) \ge \sum_{i=1}^m \gamma_i.
  \]
  
\item If $c^\gamma_{\alpha, \beta} \ne 0$, then $\gamma$ dominates $\alpha \cup \beta$ (this is the partition obtained from all of the rows of $\alpha$ and $\beta$ placed one after the other according to their lengths).
\end{enumerate}
\end{proposition}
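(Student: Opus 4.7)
The plan is to establish each item either directly from the Littlewood--Richardson tableau definition or via one of the two standard symmetries of $c^\gamma_{\alpha, \beta}$, namely the swap $c^\gamma_{\alpha, \beta} = c^\gamma_{\beta, \alpha}$ and the transpose $c^\gamma_{\alpha, \beta} = c^{\gamma^\dagger}_{\alpha^\dagger, \beta^\dagger}$.

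First I would dispatch (1), (2), and (3) as formal consequences of the setup. Item (1) is the standard branching rule for Schur functors along the diagonal $\GL(V) \times \GL(W) \hookrightarrow \GL(V \oplus W)$; one clean proof is character-theoretic, restricting the character of $\bS_\gamma(V \oplus W)$ to the product group and using the symmetric function identity $s_\gamma(x, y) = \sum_{\alpha, \beta} c^\gamma_{\alpha, \beta}\, s_\alpha(x)\, s_\beta(y)$. Item (2) is immediate from the LR tableau description: the skew shape $\gamma/\alpha$ has $|\gamma| - |\alpha|$ boxes, and a content-$\beta$ filling places exactly $|\beta|$ entries. For (3), the containment $\gamma \supseteq \alpha$ is forced since otherwise $\gamma/\alpha$ is not even a skew shape; the containment $\gamma \supseteq \beta$ then follows by invoking the swap symmetry and applying the first containment to $c^\gamma_{\beta, \alpha}$.

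Next I would tackle (4), which is the technical heart. The claim that every instance of the label $i$ lies in a row of index $\geq i$ is proved by induction on $i$, with the case $i = 1$ being vacuous. For the inductive step, suppose some $i$ sits in row $j < i$. By column strictness, the $j - 1$ cells above it contain strictly increasing entries in $\{1, \dots, i-1\}$, so every label $1, \dots, j-1$ already appears in the column at or above row $j$. Reading the word of entries row by row from top to bottom, right to left, up to and including this particular $i$, one checks that the count of $i$'s at that point strictly exceeds the count of $(i-1)$'s, violating the lattice word property. Granting (4), the dominance $\alpha + \beta \geq \gamma$ is now immediate: the boxes of $\gamma/\alpha$ lying in the first $m$ rows can only carry labels in $\{1, \dots, m\}$, of which there are at most $\beta_1 + \cdots + \beta_m$ in total, so $\sum_{i=1}^m (\gamma_i - \alpha_i) \leq \sum_{i=1}^m \beta_i$.

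Finally, (5) is deduced from (4) by applying the transpose symmetry. From $c^\gamma_{\alpha, \beta} = c^{\gamma^\dagger}_{\alpha^\dagger, \beta^\dagger}$ and (4) applied to the transposed triple, one obtains $\alpha^\dagger + \beta^\dagger \geq \gamma^\dagger$. Combining this with the identity $(\alpha \cup \beta)^\dagger = \alpha^\dagger + \beta^\dagger$ and the standard fact that the dominance order on partitions of a fixed size is reversed by transposition, one concludes $\gamma \geq \alpha \cup \beta$. The principal obstacle in this outline is the careful combinatorial bookkeeping needed in (4): specifically, balancing the column-strictness against the lattice word condition to produce the contradiction in the inductive step. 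Everything else reduces to formal manipulations with known symmetries of $c^\gamma_{\alpha, \beta}$, which are already developed in \cite{SS}.
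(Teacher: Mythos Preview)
Your outline matches the paper's proof in every part: (1) is referred to \cite{SS}, (2) and (3) are declared immediate from the tableau description, (4) is done by the same induction on $i$, and (5) is deduced from (4) via the transpose symmetry $c^\gamma_{\alpha,\beta}=c^{\gamma^\dagger}_{\alpha^\dagger,\beta^\dagger}$ together with $(\alpha\cup\beta)^\dagger=\alpha^\dagger+\beta^\dagger$.

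The one place your execution wobbles is the inductive step in (4). The column-strictness sentence is a red herring: the $j-1$ cells above your $i$ need not all lie in the skew shape $\gamma/\alpha$, and even when they do, their entries are merely some strictly increasing subset of $\{1,\dots,i-1\}$, not necessarily $\{1,\dots,j-1\}$. More to the point, that observation says nothing about the relative counts of $i$'s and $(i-1)$'s in the reading word. The correct justification of your final sentence---which is what the paper actually writes---uses the inductive hypothesis directly: every instance of $i-1$ lies in a row $\ge i-1 \ge j$, and in row $j$ itself no $i-1$ can sit to the right of this $i$ by the semistandard condition, so no $i-1$ has been read when this $i$ is encountered, violating the lattice word property. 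Replace the column-strictness sentence with that, and your argument is complete and coincides with the paper's.
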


\begin{proof}
  (1) see \cite[(4.5)]{SS} for a derivation.

  (2) and (3) are clear from the interpretation in terms of tableaux.

  (4) We prove this by induction on $i$. If $i=1$, there is nothing to show. Now suppose the statement is true for $i$. Suppose that there is a Littlewood--Richardson tableau in which $i+1$ appears in the first $i$ rows. Let $w$ be the reading word of this tableau. By the lattice word property, this instance of $i+1$ cannot appear in a row before the earliest (relative to $w$) instance of $i$, so it must appear in row $i$, and it must appear to the left of $i$ in the tableau. However, this violates the semistandard condition.

  (5) This is a consequence of (4) since $\alpha \cup \beta = (\alpha^\dagger + \beta^\dagger)^\dagger$ and $c^\gamma_{\alpha, \beta} = c^{\gamma^\dagger}_{\alpha^\dagger, \beta^\dagger}$.
\end{proof}

\subsection{Lemmas} To carry out the proofs of Propositions \ref{vanishwedge}, \ref{vanishsym}, and \ref{vanishdual}, we first establish a few supporting results. 

First, by Proposition \ref{prop:LR}\,(1), we have
\begin{align} \label{eqn:decomp}
  \bS_\lambda(\mathcal B^{\vee} \oplus \mathcal B^{\vee}) \cong \bigoplus_{\alpha, \beta} (\bS_\alpha (\mathcal B^{\vee}) \otimes \bS_\beta (\mathcal B^{\vee}))^{\oplus c^{\lambda}_{\alpha, \beta}} \cong \bigoplus_{\alpha, \beta, \gamma} \bS_\gamma(\mathcal B^{\vee})^{\oplus c^\lambda_{\alpha, \beta} c^\gamma_{\alpha, \beta}}.
\end{align} Here, the number of rows of the partitions $\alpha, \beta, \gamma$ is less than or equal to $n$, while the number of rows in the partition $\lambda$ is less than or equal to $2n$. 

We assume first that $\lambda$ is contained in the $(2n) \times (d-n-1)$ rectangle as needed in Proposition \ref{vanishwedge} and \ref{vanishsym}.

{\bf Reserve $i$ to be the $n$-index of $\lambda$. }

Pick a triple $\alpha, \beta, \gamma$ such that $\bS_\gamma(\mathcal B^{\vee}) \ne 0$, and $c_{\alpha, \beta}^\lambda c_{\alpha,\beta}^\gamma \ne 0$. We will deduce a number of restrictions on the partitions $\alpha, \beta, \gamma$. 

\begin{lemma} \label{lem:alphai-ge-i}
  We have $\alpha_i \ge i$.
\end{lemma}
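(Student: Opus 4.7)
The plan is to translate the $n$-index hypothesis on $\lambda$ into a combinatorial pigeonhole statement about the $i$-th column of the skew diagram $\lambda/\alpha$, using the fact that $c^\lambda_{\alpha,\beta}$ is computed by counting Littlewood--Richardson tableaux. The first step is to extract from Definition \ref{indef} the numerical content of the $n$-index: the $i$-th column of $\lambda$ contains at least $n+i$ boxes, i.e., $\lambda^\dagger_i \ge n+i$.

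Next, since $c^\lambda_{\alpha,\beta} \ne 0$, I fix any Littlewood--Richardson tableau of shape $\lambda/\alpha$ and content $\beta$. The partition $\beta$ has at most $n$ parts: this is automatic from the setup of \eqref{eqn:decomp}, since $\bS_\beta(\cB^\vee)$ must be a nonzero bundle on the Grassmannian $\mathbf G(d,n)$, forcing $\ell(\beta) \le n$ (alternatively, $c^\gamma_{\alpha,\beta} \ne 0$ implies $\beta \subseteq \gamma$, and $\gamma$ has at most $n$ rows). Consequently, every entry of the tableau lies in $\{1, 2, \dots, n\}$.

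The conclusion then follows from the column-strict part of the semistandard condition. The entries in the $i$-th column of $\lambda/\alpha$ are strictly increasing, and they are drawn from a set of size $n$, so this column contains at most $n$ boxes:
\[
  \lambda^\dagger_i - \alpha^\dagger_i \le n.
\]
Combining with $\lambda^\dagger_i \ge n+i$ gives $\alpha^\dagger_i \ge i$, which is equivalent to $\alpha_i \ge i$.

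I do not anticipate a genuine obstacle. The only point worth verifying carefully is the bound $\ell(\beta) \le n$, which is built into the hypotheses of \eqref{eqn:decomp}; everything else is a one-line application of strict increase down columns in a Littlewood--Richardson tableau. The $n$-index hypothesis enters in exactly the strength needed to make the pigeonhole argument on column $i$ produce the inequality $\alpha_i \ge i$ rather than something weaker.
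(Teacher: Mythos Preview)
Your proof is correct and follows essentially the same approach as the paper's: both arguments use that $\beta$ has at most $n$ rows together with the column-strict condition to bound the length of the $i$th column of $\lambda/\alpha$ by $n$, and combine this with $\lambda^\dagger_i \ge n+i$ from the $n$-index hypothesis. The paper phrases it as a proof by contradiction (assuming $\alpha_i < i$ and locating $n+1$ boxes in column $i$ of $\lambda/\alpha$), while you give the equivalent direct inequality $\lambda^\dagger_i - \alpha^\dagger_i \le n$; the content is the same.
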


\begin{proof}
  Suppose that $\alpha_i < i$. Since $\lambda$ has $n$-index $i$, recall from \eqref{indin} that $\lambda_{i+1}=\cdots=\lambda_{i+n}=i$ and thus $\lambda_i\geq i$. Then the $i$th column of the skew shape $\lambda / \alpha$ has at least $n+1$ boxes (in rows $i$ through $n+i$). But then any valid Littlewood--Richardson tableau of shape $\lambda / \alpha$ needs at least $n+1$ labels (because of the semistandard condition). This implies that $\beta_{n+1} > 0$, contradicting the fact that $\beta$ has at most $n$ rows. 
 \end{proof}

\begin{lemma} \label{lem:alpha+beta}
  We have
  \[
    (\alpha_1+\beta_1) + \cdots + (\alpha_i + \beta_i) \le i(d-n+i-1).
  \]
\end{lemma}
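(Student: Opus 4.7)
The plan is to derive the upper bound by combining the dominance consequence of $c^\lambda_{\alpha,\beta}\ne 0$ with the structural information about $\lambda$ coming from the $n$-index hypothesis. Specifically, I would use Proposition \ref{prop:LR} (5) applied with $\gamma$ replaced by $\lambda$, which states that $\lambda$ dominates the union partition $\alpha\cup\beta$. The key is then to relate the truncated sum $\sum_{j=1}^i(\alpha_j+\beta_j)$ to a partial sum of $\alpha\cup\beta$, and to exploit the explicit shape of $\lambda$ near its $n$-index.

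First I would observe the purely combinatorial fact that
\[
  \sum_{k=1}^{2i}(\alpha\cup\beta)_k \;\ge\; \sum_{j=1}^i(\alpha_j+\beta_j),
\]
because the $2i$ largest entries of the merge of the sorted lists $\alpha_\bullet$ and $\beta_\bullet$ are at least as large as any specified $2i$ entries, in particular the top $i$ from each of $\alpha$ and $\beta$. Combined with $\lambda\trianglerighteq \alpha\cup\beta$, this yields
\[
  \sum_{j=1}^i(\alpha_j+\beta_j) \;\le\; \sum_{k=1}^{2i}\lambda_k.
\]

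Next, I would compute the right-hand side using the hypotheses on $\lambda$. The definition of the $n$-index forces column $i$ of $\lambda$ to contain at least $n+i$ boxes, while $\lambda$ has at most $2n$ rows; hence $i\le n$, so $2i\le n+i$. This means the range $k=i+1,\dots,2i$ lies inside the ``plateau'' described by \eqref{indin}, where $\lambda_k=i$. For the first $i$ rows, the rectangle assumption gives $\lambda_k\le d-n-1$. Splitting the sum accordingly,
\[
  \sum_{k=1}^{2i}\lambda_k \;=\; \sum_{k=1}^{i}\lambda_k + \sum_{k=i+1}^{2i}\lambda_k \;\le\; i(d-n-1) + i^2 \;=\; i(d-n+i-1),
\]
which is exactly the desired bound.

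I expect the main subtlety to be the first inequality relating the truncated sum of $\alpha+\beta$ to a partial sum of $\alpha\cup\beta$, since this is the non-obvious combinatorial step that unlocks the dominance relation of Proposition \ref{prop:LR} (5); once this is in hand, the remainder of the argument is a direct bookkeeping using the definition of the $n$-index and the rectangle constraint. In particular, the inequality $i\le n$, which is essential for placing $2i$ inside the plateau, is a free consequence of the shape hypothesis and not something requiring separate argument.
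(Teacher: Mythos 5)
Your proof is correct and follows essentially the same route as the paper's: invoke Proposition~\ref{prop:LR}(5) to get $\lambda \trianglerighteq \alpha\cup\beta$, bound $\sum_{j=1}^i(\alpha_j+\beta_j)$ by the first $2i$ entries of $\alpha\cup\beta$ and hence by $\lambda_1+\cdots+\lambda_{2i}$, then split that sum using the rectangle bound for the first $i$ rows and the plateau \eqref{indin} for the next $i$. The only small additional detail you supply is the observation $i\le n$ (forced by $\lambda$ having at most $2n$ rows), which the paper leaves implicit when writing $\lambda_j\le i$ for $i+1\le j\le 2i$.
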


\begin{proof}
\begin{subeqns}
  By Proposition~\ref{prop:LR}\,(5), we know that $\lambda \ge \alpha \cup \beta$, so that
  \begin{equation}\label{ineq}
    \begin{split}
    \lambda_1 + \cdots + \lambda_{2i} &\ge (\alpha \cup \beta)_1 + \cdots + (\alpha \cup \beta)_{2i}\\
    &\ge (\alpha_1 + \beta_1) + \cdots + (\alpha_i + \beta_i).
  \end{split}
\end{equation}
  Since $\lambda_j \le d-n-1$ for $j=1,\dots,i$ and $\lambda_j \le i$ for $j=i+1,\dots,2i$, the lemma follows. 
\end{subeqns}
\end{proof}

\begin{lemma} \label{lem:gammai-bounds}
We have  $i + 1 \le \gamma_i \le d-n+i-1$.
\end{lemma}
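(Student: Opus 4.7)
The upper bound $\gamma_i \le d-n+i-1$ will follow immediately from Lemma \ref{lem:alpha+beta} combined with Proposition \ref{prop:LR}(4) applied to $c^\gamma_{\alpha,\beta} \ne 0$: the dominance $\gamma \le \alpha + \beta$ gives $\sum_{j=1}^i \gamma_j \le \sum_{j=1}^i (\alpha_j + \beta_j) \le i(d-n+i-1)$, and since $\gamma$ is weakly decreasing one has $i\gamma_i \le \sum_{j=1}^i \gamma_j$, so $\gamma_i \le d-n+i-1$. The lower bound $\gamma_i \ge i+1$ is more delicate, and the plan is to pass to transposes and chain two dominance inequalities---one coming from $c^\gamma_{\alpha,\beta}\ne 0$ via Proposition \ref{prop:LR}(4), the other from $c^\lambda_{\alpha,\beta}\ne 0$ via Proposition \ref{prop:LR}(5)---through $\alpha$ and $\beta$. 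The standard transpose identities $(\alpha \cup \beta)^\dagger = \alpha^\dagger + \beta^\dagger$ and $(\alpha+\beta)^\dagger = \alpha^\dagger \cup \beta^\dagger$, together with the fact that transposition reverses dominance, will convert the two relations into compatible form.

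Applying Proposition \ref{prop:LR}(4) to $\gamma$ gives $\gamma \le \alpha + \beta$, which transposes to $\gamma^\dagger \ge \alpha^\dagger \cup \beta^\dagger$ in dominance, whence
\[
  \sum_{c=1}^{2i}\gamma^\dagger_c \;\ge\; \sum_{c=1}^{2i}(\alpha^\dagger \cup \beta^\dagger)_c \;\ge\; \sum_{k=1}^i \alpha^\dagger_k + \sum_{k=1}^i \beta^\dagger_k,
\]
where the last step uses that the sum of the top $2i$ entries of a multiset is at least the sum of any $2i$ specific entries, such as the first $i$ of each of $\alpha^\dagger$ and $\beta^\dagger$. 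Applying Proposition \ref{prop:LR}(5) to $\lambda$ gives $\lambda \ge \alpha \cup \beta$, which transposes to $\lambda^\dagger \le \alpha^\dagger + \beta^\dagger$, and hence
\[
  \sum_{k=1}^i (\alpha^\dagger_k + \beta^\dagger_k) \;\ge\; \sum_{k=1}^i \lambda^\dagger_k \;\ge\; i(n+i),
\]
the final bound using the $n$-index hypothesis that each of the first $i$ columns of $\lambda$ has at least $n+i$ boxes. Combining these estimates produces the key inequality $\sum_{c=1}^{2i} \gamma^\dagger_c \ge i(n+i)$.

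To finish, I will assume for contradiction that $\gamma_i \le i$, equivalently $\gamma^\dagger_{i+1} \le i-1$. Monotonicity then forces $\gamma^\dagger_c \le i-1$ for every $c \ge i+1$, while $\gamma^\dagger_c \le n$ for all $c$ since $\gamma$ has at most $n$ rows, giving
\[
  \sum_{c=1}^{2i} \gamma^\dagger_c \;\le\; in + i(i-1) \;=\; i(n+i-1) \;<\; i(n+i),
\]
a contradiction. The main obstacle in this argument is recognizing that the rectangular long-column data $\lambda^\dagger_k \ge n+i$ coming from the $n$-index hypothesis does not directly bound any column of $\gamma$; the $\cup \leftrightarrow +$ swap under transposition is precisely what lets the inequalities applied respectively to $\lambda$ and $\gamma$ chain together through $\alpha$ and $\beta$ to yield a bound on $\gamma^\dagger$.
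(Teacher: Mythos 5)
Your proof is correct, and the lower bound argument is a genuinely different route from the paper's. The upper bound matches the paper's proof exactly (dominance from Proposition \ref{prop:LR}(4) combined with Lemma \ref{lem:alpha+beta}). For the lower bound $\gamma_i \ge i+1$, the paper first invokes Lemma \ref{lem:alphai-ge-i} to get $\alpha_i \ge i$, then splits into cases: if $\alpha_i \ge i+1$, Proposition \ref{prop:LR}(3) immediately gives $\gamma_i \ge i+1$; if $\alpha_i = i$, a delicate hands-on argument with Littlewood--Richardson tableaux of shape $\gamma/\alpha$ and content $\beta$ tracks which rows the labels in the interval $[i,\,i+n-\alpha_i^\dagger]$ can occupy and derives a contradiction from $\gamma_i = i$. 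You instead avoid the case split and Lemma \ref{lem:alphai-ge-i} entirely by transposing the two dominance relations (Proposition \ref{prop:LR}(4) for $\gamma$ and (5) for $\lambda$) using the identities $(\alpha+\beta)^\dagger = \alpha^\dagger \cup \beta^\dagger$ and $(\alpha\cup\beta)^\dagger = \alpha^\dagger + \beta^\dagger$; chaining these through $\alpha^\dagger, \beta^\dagger$ gives $\sum_{c=1}^{2i}\gamma^\dagger_c \ge i(n+i)$, which contradicts the bound $\sum_{c=1}^{2i}\gamma^\dagger_c \le in + i(i-1)$ forced by $\gamma_i \le i$ and $\gamma$ having at most $n$ rows. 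Each step you use checks out: the multiset-sum inequality, the fact that $\lambda^\dagger_k \ge n+i$ for $k\le i$ (by monotonicity once $\lambda^\dagger_i \ge n+i$), and the equivalence $\gamma_i \le i \Leftrightarrow \gamma^\dagger_{i+1} \le i-1$. Your argument is shorter and more symmetric in $\alpha$ and $\beta$, whereas the paper's makes the role of the tableau combinatorics explicit; both are valid, and yours has the advantage of avoiding the special-case bookkeeping.
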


\begin{proof}
  \begin{subeqns}
    We know that $\alpha_i \ge i$ from Lemma~\ref{lem:alphai-ge-i}. If in fact $\alpha_i \ge i+1$, then we can use Proposition~\ref{prop:LR}(3) to conclude that $\gamma_i \ge i+1$.

    Otherwise, we have $\alpha_i = i$. Suppose that $\gamma_i = i$. Then the $i$th row of $\gamma/\alpha$ has no boxes. Since $c^\lambda_{\alpha, \beta} \ne 0$, there is a Littlewood--Richardson tableau of shape $\lambda/\alpha$ and type $\beta$. Since $\lambda_{i+n}=i$, the $i$th column of $\lambda/\alpha$ has at least $i+n-\alpha_i^\dagger$ boxes, so that $\beta$ has at least $i+n-\alpha_i^\dagger$ rows (by the semistandard condition).

    Next, there is also a Littlewood--Richardson tableau of shape $\gamma/\alpha$ and type $\beta$. The integers in the interval $[i,i+n-\alpha_i^\dagger]$ cannot go in the first $i-1$ rows of $\gamma/\alpha$ by Proposition~\ref{prop:LR}(4), and cannot go in the $i$th row since it is empty. Thus, these numbers must go in rows $i+1$ or higher.  
  Again, since $\gamma_i=i$, they are also constrained to the first $i$ columns of $\gamma/\alpha$ as well. 
Now, in $\gamma/\alpha$, the $i$th column only has boxes in rows $\alpha_i^\dagger+1, \ldots, n$, at most. Consequently, the labels $[i, i+n-\alpha_i^{\dagger}]$ can only be placed in rows $\alpha_i^\dagger+1,\dots,n$. 

Suppose it is possible to do this. Consider the subdiagram $D$ of $\gamma/\alpha$ consisting of boxes that are filled with entries $\ge i$. If we subtract $i-1$ from every entry, we claim that the result is a valid Littlewood--Richardson tableau of shape $D$. Subtracting the same amount from each entry does not affect any of the semistandard inequalities. Furthermore, if $w$ is the reading word of the Littlewood--Richardson tableau of $\gamma/\alpha$ of type $\beta$ that we're considering, and $w'$ is the reading word of its restriction to $D$, then in the notation of \S\ref{ss:L-R}, for $j \ge i$ and any $m$, we have $w'_j(m) = w_j(m)$. In particular, then $w'_j(m) \ge w'_{j+1}(m)$ for all $j \ge i$ and hence the result of subtracting $i-1$ from all entries of $D$ has the lattice word property.

But then we have too many labels: in fact, at least $n-\alpha_i^\dagger+1$ labels and only $n-\alpha_i^\dagger$ rows to put them into. We have a contradiction to Proposition~\ref{prop:LR}(4) and hence $\gamma_i \ge i+1$.

Finally, again by Proposition~\ref{prop:LR}\,(4), we have $\alpha+\beta \ge \gamma$. Using Lemma~\ref{lem:alpha+beta}, we obtain 
\begin{equation}\label{ineq2}
  i\gamma_i \le \gamma_1 + \cdots + \gamma_i \le i(d-n+i-1),
\end{equation}
and hence $\gamma_i \le d-n+i-1$.\footnote{We could relax the condition that $\lambda$ is contained in the $(2n) \times (d-n-1)$ rectangle here. It would suffice to know that $\lambda_1+\cdots+\lambda_i \le i(d-n-1)+i-1$.}
\end{subeqns}
\end{proof}

\begin{lemma} \label{lem:sym-rect}
  \begin{enumerate}
  \item   If $i < n$, then $\gamma_{i+1} \ge i$.
  \item   If $i=n$, then $\gamma_n \ge 2n$.
  \end{enumerate}
\end{lemma}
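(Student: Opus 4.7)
The plan is to work directly with the Littlewood--Richardson coefficients appearing in the decomposition \eqref{eqn:decomp}, exploiting the rigid structure that the $n$-index hypothesis forces on $\lambda$. For every triple $(\alpha,\beta,\gamma)$ with $c^\lambda_{\alpha,\beta}\,c^\gamma_{\alpha,\beta}\neq 0$, Proposition~\ref{prop:LR} supplies the dominance constraints $\gamma \supseteq \alpha$, $\gamma \supseteq \beta$, $\alpha+\beta \geq \gamma$, and $\lambda \geq \alpha\cup\beta$. I would prove (1) by contradiction using only these dominance inequalities, and (2) by a direct counting argument on a forced Littlewood--Richardson tableau.

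For (1), assume $\gamma_{i+1} \le i-1$. Since $\gamma$ has at most $n$ rows, this gives $|\gamma| \le (\gamma_1 + \cdots + \gamma_i) + (n-i)(i-1)$. Chaining dominance bounds exactly as in the proof of Lemma~\ref{lem:alpha+beta}, one obtains $\gamma_1 + \cdots + \gamma_i \le (\alpha_1+\beta_1) + \cdots + (\alpha_i + \beta_i) \le \lambda_1 + \cdots + \lambda_{2i}$. Because $i<n$, the constant block $\lambda_{i+1} = \cdots = \lambda_{i+n} = i$ from \eqref{indin} covers rows $i+1, \ldots, 2i$, so $\lambda_1 + \cdots + \lambda_{2i} = (\lambda_1 + \cdots + \lambda_i) + i^2$. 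Comparing with the lower bound $|\lambda| \ge (\lambda_1 + \cdots + \lambda_i) + ni$ (again from \eqref{indin}) and using $|\gamma|=|\lambda|$ produces $i^2 \ge ni - (n-i)(i-1) = n + i^2 - i$, i.e.\ $n \le i$, contradicting $i<n$.

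For (2), the equality $i=n$ together with $\lambda$ having at most $2n$ rows forces each of the first $n$ columns of $\lambda$ to have exactly $2n$ boxes, so $\lambda \supseteq (n^{2n})$. Lemma~\ref{lem:alphai-ge-i} gives $\alpha_n \ge n$; since $\alpha$ has at most $n$ rows, $\alpha$ then contains the $n\times n$ square. Hence the first $n$ columns of any Littlewood--Richardson tableau of shape $\lambda/\alpha$ and content $\beta$ live entirely in rows $n+1,\ldots,2n$. In each such column the $n$ entries are strictly increasing by the semistandard condition and lie in $\{1,\ldots,n\}$ because $\beta$ has at most $n$ rows, so they are forced to be exactly $1,2,\ldots,n$ from top to bottom. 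The label $n$ therefore occurs in each of these $n$ columns, giving $\beta_n \ge n$. Since $\gamma$ also has at most $n$ rows, Proposition~\ref{prop:LR}(4) confines every copy of the label $n$ in any tableau of shape $\gamma/\alpha$ and content $\beta$ to row $n$, so $\gamma_n - \alpha_n \ge \beta_n \ge n$; combined with $\alpha_n \ge n$, this yields $\gamma_n \ge 2n$.

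The main obstacle is the combinatorial identification in (2): the claim that each of the first $n$ columns of $\lambda/\alpha$ is rigidly filled with $1,2,\ldots,n$ in order. It is only at the threshold $i=n$ that the column heights and the available label alphabet become tight enough to leave no freedom. In part (1) the difficulty is of a different, purely arithmetic, character; the key is to pick the truncation index $2i$ in the dominance bound \eqref{ineq} so that the constant block $\lambda_{i+1}=\cdots=\lambda_{2i}=i$ fits exactly and produces the cancellation needed to close the estimates.
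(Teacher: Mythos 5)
Your proof is correct, and it takes a genuinely different route from the paper's, at least for part (1). The paper handles both parts uniformly: it observes that $\lambda/\alpha$ contains an $n\times c$ rectangle with $c = i-\alpha_{i+1}$, deduces $\beta_n \ge c$, and then uses the identity $\bS_\beta(\cB^\vee) = (\det \cB^\vee)^{\otimes c}\otimes \bS_{\beta'}(\cB^\vee)$ to conclude that $\gamma \supseteq \alpha + (c^n)$, whence $\gamma_{i+1}\ge \alpha_{i+1}+c = i$ in case (1) and $\gamma_n \ge \alpha_n + n \ge 2n$ in case (2). Your argument for (1) replaces this determinant-twist step with a purely numerical dominance/size estimate that plays $|\lambda|=|\gamma|$ off against the constant block $\lambda_{i+1}=\cdots=\lambda_{i+n}=i$; it is more elementary (no $\GL$-representation bookkeeping) but also more ad hoc, since it does not suggest the uniform statement $\gamma\supseteq\alpha+(c^n)$ that the paper obtains. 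Your argument for (2) is closer in spirit to the paper's: you also end up showing $\gamma_n - \alpha_n \ge n$, but you derive it by forcing the LR tableau on $\lambda/\alpha$ to be rigid in its first $n$ columns (each column filled $1,\ldots,n$), getting $\beta_n\ge n$, and then using Proposition~\ref{prop:LR}(4) to place all $n$'s in row $n$ of $\gamma/\alpha$; the paper instead reads $\gamma_n\ge\alpha_n + n$ directly off the determinant twist. Both routes are valid; the paper's is slicker and more uniform, yours exposes the combinatorial rigidity more explicitly.
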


\begin{proof} Since $\lambda_{i+1}=i$, we have $\alpha_{i+1}\leq i$ by Proposition \ref{prop:LR}\,(3). 
 
 Let $c = i - \alpha_{i+1}$. Then $\lambda/\alpha$ contains the subrectangle occupying rows $i+1,\dots,i+n$ and columns $\alpha_{i+1}+1,\dots,i$, which in particular has $n$ rows and $c$ columns. Since $c^\lambda_{\alpha, \beta} \ne 0$, we can fill $\lambda/\alpha$ with content $\beta$. By examining the $n\times c$ subrectangle and using the semistandard property, we obtain $\beta_n \ge c$. Let $\beta'$ be the result of subtracting $c$ from all parts of $\beta$. Then $$\bS_\beta(\mathcal B^{\vee}) = (\det \mathcal B^{\vee})^{\otimes c} \otimes \bS_{\beta'}(\mathcal B^{\vee})$$ since $\rank(\mathcal B^{\vee})=n$. Hence to compute $\bS_\alpha(\mathcal B^{\vee}) \otimes \bS_\beta(\mathcal B^{\vee})$, we can first add $c$ to all values of $(\alpha_1,\dots,\alpha_n)$ and then tensor with $\bS_{\beta'}(\mathcal B^{\vee})$.

  In particular, if $i<n$, then $\gamma_{i+1} \ge \alpha_{i+1} + c = i$, again by Proposition \ref{prop:LR}(3). Otherwise, if $i=n$, since $\alpha_{n+1}=0$ we find $c=n$. Using Lemma~\ref{lem:alphai-ge-i}, we have $\alpha_n\geq n$, and thus $\gamma_n \ge \alpha_n + c \ge 2n$.
\end{proof}

\subsection{Vanishing} We continue to use the notation from the previous section.\vskip.1in

\begin{proof}[Proof of Proposition \ref{vanishwedge}]
Assume $\lambda$ fits in the $(2n)\times (d-n-1)$ rectangle, and let ${\bf S}_{\gamma}(\mathcal B^{\vee})$ be a summand of ${\bf S}_{\lambda}(\mathcal B^{\vee}\oplus \mathcal B^{\vee}).$  Consider the tensor product $\bS_\gamma (\mathcal B^{\vee}) \otimes \bigwedge^k(\mathcal B)$. First we use that $\bigwedge^k\mathcal B = \det(\mathcal B) \otimes \bigwedge^{n-k} \mathcal B^{\vee}$ and $\bigwedge^{n-k} \mathcal B^{\vee} = \bS_{(1^{n-k})} (\mathcal B^{\vee})$. The Pieri rule describes the outcome of tensoring with $\bigwedge^{n-k} \mathcal B^{\vee}$. The result is a sum over partitions where we add $n-k$ boxes, no two in the same row. Tensoring with $\det(\mathcal B)$ is the same as subtracting 1 from all entries. Therefore, for any summand $\bS_\delta (\mathcal B^{\vee})$ of this tensor product, we have $\gamma_i-1 \le \delta_i \le \gamma_i$. Hence we conclude from Lemma~\ref{lem:gammai-bounds} that
$$i \le \delta_i \le d-n+i-1,$$ completing the argument in this case. 
\end{proof}

\begin{proof}[Proof of Proposition \ref{vanishsym}]
The Pieri rule applied to symmetric powers tells us that $\bS_\nu(\mathcal B^{\vee})$ is a summand of $\bS_\mu(\mathcal B^{\vee}) \otimes \Sym^k(\mathcal B^{\vee})$ if and only if $|\nu|=|\mu|+k$ and the interlacing property $\nu_j \ge \mu_j \ge \nu_{j+1}$ holds for all $j$. In fact, it makes no difference if some entries of $\nu$ and $\mu$ are negative since we can make them nonnegative by twisting by powers of $\det(\mathcal B^{\vee})$ and untwisting after. 

If $\bS_\delta(\mathcal B^{\vee})$ is a summand of $\bS_\gamma(\mathcal B^{\vee}) \otimes \Sym^k(\mathcal B)$, we obtain by dualizing that $\bS_{-\delta}(\mathcal B^{\vee})$ is a summand of 
$\bS_{-\gamma}(\mathcal B^{\vee})\otimes \Sym^k (\mathcal B^{\vee})$. Thus, $|\gamma|=|\delta|+k$ and the interlacing property gives $$\gamma_{j+1}\leq \delta_j\leq \gamma_j.$$ (In particular, if $\bS_\delta(\mathcal B^{\vee})$ is a summand of $\bS_\gamma(\mathcal B^{\vee}) \otimes \Sym^k(\mathcal B)$, then $\bS_\gamma(\mathcal B^{\vee})$ is a summand of $\bS_\delta(\mathcal B^{\vee}) \otimes \Sym^k(\mathcal B^{\vee})$.)

Consider the $n$-index $i$ of $\lambda$. If $i<n$, then Lemma~\ref{lem:sym-rect} tells us that $\gamma_{i+1} \ge i$. The interlacing property then forces $\delta_i \ge i$. If $i=n$, then $\gamma_n \ge 2n$. Since $\gamma$ is obtained from $\delta$ by adding $k$ boxes, we have $$\delta_n \ge \gamma_n-k \ge 2n-k\geq n$$ since we assume that $k \le n$.

In any case, under either assumption, we have shown that $\delta_i \ge i$ and also $\delta_i \le \gamma_i \le d-n+i-1$ by Lemma~\ref{lem:gammai-bounds}. This is what we set out to prove. 
\end{proof}

\begin{proof}[Proof of Proposition \ref{vanishdual}]
  Assume now that $\lambda$ is contained in the $(2n) \times (d-n -t-1)$ rectangle. For case (1), for a partition $\lambda$ of $n$-index $i$, we have $$\lambda_j\leq d-n-t-1 \text{ for }j\leq i, \quad \lambda_{j}\leq i \text{ for } i+1\leq j\leq 2i.$$ Thus, by \eqref{ineq} and \eqref{ineq2}, we have
  \[
    i\gamma_i\leq \gamma_1+\cdots+\gamma_i\leq \lambda_1+\cdots+\lambda_{2i}\leq i(d-n-t-1)+i\cdot i\implies \gamma_i\leq d-n-t-1+i.
  \]
  We also have $\gamma_i\geq i+1$ by Lemma \ref{lem:gammai-bounds}. We inspect the summands $${\mathbf S}_{\delta}(\mathcal B^{\vee})\subset {\mathbf S}_{\gamma}(\mathcal B^{\vee})\otimes \bigwedge^{p_1} \mathcal B^{\vee}\otimes \bigwedge^{p_2} \mathcal B^{\vee} \otimes \cdots \otimes \bigwedge^{p_t} \mathcal B^{\vee}\otimes \bigwedge^k \mathcal B.$$ We use $\bigwedge^k \mathcal B=\bigwedge^{n-k} \mathcal B^{\vee}\otimes \det \mathcal B.$ By repeated application of the Pieri rule, and taking into account that tensorization by $\det \mathcal B$ subtracts one box from each entry, we conclude $$\gamma_i-1\leq \delta_i\leq \gamma_i+t.$$ The conclusion follows since $$i+1\leq \gamma_i\leq d-n-t-1+i\implies i\leq \delta_i\leq d-n+i-1.$$ 

For part (2), we established in the proof of Proposition \ref{vanishsym} that all summands ${\bf S}_{\delta}(\mathcal B^{\vee})$ of ${\bf S}_{\lambda}(\mathcal B^{\vee}\oplus \mathcal B^{\vee})\otimes  \text{Sym}^k \,\mathcal B$ satisfy $$i\leq \delta_i\leq d-n-t-1+i,$$ where the modified upper bound is due to the different size of the rectangle that contains $\lambda$. By the Pieri rule, tensorization by $(\bigwedge^{\bullet} \mathcal B^{\vee})^{\otimes t}$ can only increase the lengths of rows, and if so by at most $t$ boxes. Thus all summands ${\bf S}_{\nu}(\mathcal B^{\vee})$ of ${\bf S}_{\lambda}(\mathcal B^{\vee}\oplus \mathcal B^{\vee})\otimes \bigwedge^{p_1}\mathcal B^{\vee}\otimes \cdots \otimes \bigwedge^{p_t} \mathcal B^{\vee}\otimes \text{Sym}^k \,\mathcal B$ satisfy $$i\leq \nu_i\leq d-n+i-1,$$ as claimed.

Finally, we prove (3).  If $\mathbf S_{\gamma}(\mathcal B^{\vee})$ is a summand of ${\mathbf S}_{\lambda} (\mathcal B^{\vee}\oplus \mathcal B^{\vee})$, then by the same reasoning as in Lemma \ref{lem:alphai-ge-i} we have $i\leq \alpha_i\leq \gamma_i$. By \eqref{ineqb} $$\lambda_1, \ldots, \lambda_i\leq d-n-t-1, \quad \lambda_{i+1}, \ldots, \lambda_{2i}\leq i+1.$$ 
Using \eqref{ineq} and \eqref{ineq2}, we obtain
\[
  i\gamma_i\leq \gamma_1+\cdots+\gamma_i\leq \lambda_1+\cdots+\lambda_{2i}\leq i(d-n-t-1)+i(i+1)\implies \gamma_i\leq d-n-t+i.
\]
By repeated application of the Pieri rule, all summands $\mathbf S_{\delta}\left(\mathcal B^{\vee}\right)$ of $\mathbf S_{\gamma}(\mathcal B^{\vee})\otimes 
\bigwedge^{p_1} \mathcal B^{\vee} \otimes \cdots \otimes \bigwedge^{p_{t-1}} \mathcal B^{\vee}$ satisfy $\gamma_i\leq \delta_i\leq \gamma_i+(t-1).$ Since $i\leq \gamma_i\leq d-n-t+i$, we have $i\leq \delta_i\leq d-n+i-1.$ Therefore, condition \eqref{conditiondual} is satisfied for $\delta$ and $j=i$, and all cohomology vanishes.
\end{proof}

\section{Corollaries}\label{cor} 

\subsection{Corollary \ref{tos} and universality.} \label{cortos}

We explain the universality arguments needed to derive Corollary \ref{tos} from the genus $0$ computations in Theorems \ref{ta}, \ref{tb}, and \ref{tc}. 

Regarding equation \eqref{wedge}, we have the factorization
\begin{equation}\label{factor}
  \sum_{n=0}^{\infty} q^n\chi \bigg(\quot_{\,C }(\mathbb C^N, n), \bigwedge_yL^{[n]}\bigg)=\mathsf A^{\chi(\mathcal O_C)}\cdot \mathsf B^{\chi(L)}
\end{equation}
where $\mathsf A, \mathsf B\in 1+q\,\mathbb Q[y][\![q]\!]$ are two universal power series whose coefficients may depend on $N$ but not on the pair $(C, L)$. This factorization is by now a standard fact, see for instance \cite {EGL, OS, stark} for various incarnations of this statement. To establish \eqref{wedge}, we show $$\mathsf A=(1-q)^{-1}, \quad \mathsf B=(1+qy)^N.$$ 

Specializing $C=\mathbb P^1$ and $\deg L=\ell\geq n$ in \eqref{factor}, and using Theorem \ref{ta}\,(1) we obtain
\[
  \left[q^n\right] \mathsf A\cdot \mathsf B^{\ell+1}=\sum_{k=0}^{n} y^k \binom {N\chi (L)}{k},
\]
where the brackets denote extracting the relevant coefficient in the $q$-expansion. By direct calculation, we also have
$$
\left[q^n\right] (1-q)^{-1}\cdot \left((1+qy)^{N}\right)^{\ell+1}=\sum_{k=0}^{n} y^k \binom {N\chi (L)}{k}.
$$
It remains to explain that the coefficients
$$
\left[q^n\right] \mathsf A\cdot \mathsf B^{\ell+1} \text{ for all } \ell\geq n
$$
determine the series $\mathsf A, \mathsf B$ at most uniquely. We argue inductively, each coefficient at a time. Explicitly, we write
$$\mathsf A=1+a_1q+a_2q^2+\cdots, \quad \mathsf B=1+b_1q+b_2q^2+\cdots.
$$
Then $$\left[q^n\right] \mathsf A\cdot \mathsf B^{\ell+1}=a_n+(\ell+1)b_n + \text {lower order terms in }n.$$ The lower order terms are determined by the induction hypothesis. The inductive step follows since the principal terms $a_n+(\ell+1)b_n$ for all $\ell \geq n$ determine $a_n, b_n$ at most uniquely. 

For \eqref{dual} the argument is similar, using the factorization
$$\sum_{n=0}^{\infty} q^n\chi\bigg(\quot_{\,C }(\mathbb C^N, n), \bigotimes_{i=1}^{t}\big(\bigwedge_{y_i} M_i^{[n]} \big)^{\vee}\bigg)=
{\mathsf A}^{\chi(\mathcal O_C)} \cdot \mathsf B_1^{\chi(M_1)} \cdots \mathsf B_{t} ^{\chi(M_t)}.$$
This time, we specialize $C=\mathbb P^1$, and $$M_1= M_2=\cdots=M_t=M, \quad \deg M=m\geq n.$$ By Theorem \ref{tc}, we have 
$$\left[q^n\right]\mathsf A\cdot (\mathsf B_1\cdots \mathsf B_t)^{m+1}=1 \text{ for all } m\geq n.$$ Indeed, the only non-zero contribution appears from the free term $y_1=\cdots=y_t=0$ and yields the answer $\chi(\mathsf {Quot}_{\mathbb P^1}(\mathbb C^N, n), \mathcal O)=1$ since $\mathsf{Quot}_{\mathbb P^1}(\mathbb C^N, n)$ is rational. By the above reasoning, the series
$$\mathsf A=(1-q)^{-1}, \quad \mathsf B_1\cdots \mathsf B_t=1$$
are uniquely determined. Next, we set $$M_1=L, \quad M_2=\cdots=M_t=M,$$
where $\deg L=\deg M-1=m-1\geq n-1$. This time around, Theorem~\ref{tc} implies
$$
\left[q^n\right] \left(\mathsf A\cdot \mathsf B_1^{-1}\right)\cdot \left(\mathsf B_1\cdots \mathsf B_t\right)^{m+1}=1 \implies \mathsf A\cdot \mathsf B_1^{-1}=(1-q)^{-1}, \quad  \mathsf B_1\cdots \mathsf B_t=1.
$$
Therefore $$\mathsf A=(1-q)^{-1}, \quad \mathsf B_1=\cdots=\mathsf B_t=1,$$ and \eqref{dual} follows from here. 

Equation \eqref{sym} uses Theorem \ref{tb}\,(1). Indeed, we have the factorization $$\sum_{n=0}^{\infty} q^n\chi \bigg(\quot_{\,C }(\mathbb C^N, n), \text{Sym}_yL^{[n]}\bigg)=\mathsf A^{\chi(\mathcal O_C)}\cdot \mathsf B^{\chi(L)}\,,$$ where $\mathsf{A}, \mathsf B\in 1+q\mathbb Q[\![y]\!][\![q]\!].$ Fix $n\geq 1$. By Theorem \ref{tb}, if $\deg L=\ell\geq n$, we have \begin{eqnarray*}\left[q^n\right]\,\mathsf A\cdot \mathsf B^{\ell+1} &= \sum_{k\geq 0} y^k \chi \bigg(\quot_{\,\mathbb P^1 }(\mathbb C^N, n), \text{Sym}^k L^{[n]}\bigg)= \sum_{k=0}^{n} y^k (-1)^k\binom {-N(\ell+1)}{k} \mod y^{n+1}.\end{eqnarray*} Both sides of this identity are polynomials in $(\ell+1)$. (On the left hand side, these polynomials depend on the first $q$-coefficients $a_1, \ldots, a_n, b_1, \ldots, b_n$ of $\mathsf A, \mathsf B$ considered modulo $y^{n+1}$.) Consequently, the same equality holds for all values of $\ell$ without restrictions. Hence, for all $L$, we have 
\begin{align*}\sum_{n=0}^{\infty}\sum_{k=0}^{n} q^n y^k \chi \bigg(\quot_{\,\mathbb P^1 }(\mathbb C^N, n), \text{Sym}^k L^{[n]}\bigg)&= \sum_{n=0}^{\infty}q^n \bigg(\sum_{k=0}^{n} y^k (-1)^k\binom {-N(\ell+1)}{k} \bigg)\\&=(1-q)^{-1}\cdot (1-qy)^{-N\chi(L)}\end{align*} as stated in \eqref{sym}.  \qed

\subsection {Corollary \ref{t2}.} \label{prooft2} We analyze the cohomology groups of $L^{[n]}$ for all line bundles $L\to \mathbb P^1$ using a few simple considerations. The corollary can also be derived by combining the methods of \cite [Corollary 9.3]{BGS} when adapted to the case of the projective line, followed by a calculation on the symmetric product.
 
Let $p\in \mathbb P^1$ and write $\mathcal Q_p=\mathcal Q|_{p\times \mathsf{Quot}}.$ The exact sequence $$0\to L(-p)\to L\to L_{p}\to 0$$ yields an exact sequence over $\mathsf {Quot}$: 
\begin{equation}\label{ex}
  0\to L(-p)^{[n]}\to L^{[n]}\to \mathcal Q_p\to 0.
\end{equation}

When $\deg L\geq n+1$, the bundles $L^{[n]}$ and $L(-p)^{[n]}$ carry no higher cohomology by Theorems \ref{ta}\,(1) or \ref{tb}\,(1) for $k=1$. Taking cohomology in \eqref{ex}, we obtain
\begin{equation}\label{vann}
  H^{i}(\mathcal Q_p)=0, \quad i\geq 1.
\end{equation}
We go back to \eqref{ex} written for arbitrary $L$, not necessarily sufficiently positive. Considering cohomology again and using \eqref{vann}, we obtain
\begin{equation}\label{van}
  H^i(L^{[n]})=H^i(L(-p)^{[n]}), \quad i\geq 2.
\end{equation}
Since $H^i(L^{[n]})=0$ for $\deg L\geq n$ and $i\geq 2$, it follows from \eqref{van} that $H^i(L^{[n]})=0$ for all $i\geq 2$ and all $L$. \qed

 \end{document}